\setlist[itemize]{noitemsep, topsep=0pt}
\newtheorem{theorem}{\textbf{Theorem}}[section]
\newtheorem{definition}[theorem]{\textbf{Definition}}
\newtheorem{proposition}[theorem]{\textbf{Proposition}}
\newtheorem{lemma}[theorem]{\bf Lemma}
\newtheorem{corollary}[theorem]{\bf Corollary}
\theoremstyle{definition}
\newtheorem*{question}{\bf Question}
\newcommand{\will}[1]{\textcolor{purple}{#1}}
\DeclareMathOperator{\lcm}{lcm}
\DeclareMathOperator{\ord}{ord}
\title{Periodicity and Dynamical Systems of Dickson Polynomials in Finite Fields}
\author[1]{Yen-Ju, Chen}
\author[2]{Wayne, Peng}
\affil[1]{Department of Mathematics, National Taiwan Normal University}
\affil[2]{Department of Mathematics, National Central University}
\date{\today}
\begin{document}

\maketitle
\begin{abstract}
This paper investigates the dynamical properties of the Dickson polynomials \(D_n(x, \alpha)\) of the first kind over finite fields, with an emphasis on the periodicity and algebraic structure of their iterated sequences. We consider the sequence \( [D_n(x, \alpha) \pmod{x^q - x}]_{n\geq1} \), and determine the exact period of this sequence. We then use the classical functional equation for the Dickson polynomials to relate the dynamics of \(D_n(x,\alpha)\) to the power map \(u\mapsto u^n\) on a suitable subset of \(\mathbb F_{q^2}^\times\). In the permutation case \(\gcd(n,q^2-1)=1\), this gives an explicit description of the group of Dickson polynomial functions under composition. We also obtain partial structural result in the non-permutation case. As further applications, we derive several new identities for the Dickson polynomials. Finally, we identify and prove a symmetry property of the Dickson polynomial family.
\end{abstract}

\section{Introduction}

Let \(\mathbb F_q\) be the finite field of order \(q\). The iteration of polynomial and rational maps over $\mathbb F_q$ has been studied extensively from several points of view, including finite dynamical systems, pseudorandom number generation, cryptography, and the analysis of Pollard-rho type algorithms. A standard way to study a map
\[
    f:\mathbb F_q \longrightarrow \mathbb F_q
\]
is through its functional graph: the vertices are the elements of \(\mathbb F_q\), and there is a directed edge from \(a\) to \(f(a)\). Since the functional graph is defined by a function on a finite set, each connected component consists of a unique directed cycle together with rooted trees attached to the periodic points, points on cycles. Thus quantities such as periods, preperiods, rho lengths, and connected components can be studied through the associated functional graph.

A broad survey of iterations of mappings over finite fields is given by Martins, Panario, and Qureshi~\cite{MartinsPanarioQureshiSurvey}. Their survey discusses, among other families, power maps, R\'edei functions, Chebyshev polynomials, linearized polynomials, and quadratic maps. Some of these families are especially tractable because their dynamics can be compared with the power map \(x\mapsto x^n\) on \(\mathbb F_q^\times\). Similar group-theoretic mechanisms appear in the study of R\'edei functions; see, for instance,~\cites{QureshiPanarioRedei,QureshiPanarioMartinsRedei}. The Chebyshev case is also closely related to this framework; see~\cites{ALDENGASSERT201483,QureshiPanarioChebyshev}. The group-theoretic viewpoint is one of the main ideas of the present paper.

Explicit descriptions of functional graphs have been obtained for several trackable families. For example, Vasiga and Shallit studied the functional graphs of \(x^2\) and \(x^2-2\) over prime fields~\cite{VasigaShallit}. Qureshi and Panario completely described the functional graphs of Chebyshev polynomials over finite fields, including explicit information about periodic points, preperiodic points, rooted trees attached to periodic points and connected components~\cite{QureshiPanarioChebyshev}. On the other hand, for less symmetric families, random mapping heuristics often provide a useful model for understanding average dynamical behavior; see, for example,~\cites{PollardRhoFactorization,BrentPollardFermat,BachPollardRho,MartinsPanarioRandomMappings}. Periodic-point statistics for reductions modulo primes have also been studied using arboreal Galois theory~\cites{Odoni1985,BIJRMMST,JKT2016,Juul2021}.

Among these families, Chebyshev polynomials are particularly relevant to the present work. The Chebyshev polynomial \(T_n\) is characterized by the identity
\[
    T_n(u+u^{-1}) = u^n+u^{-n}.
\]
Dickson polynomials are classical objects closely related to Chebyshev polynomials. For \(\alpha\in \mathbb F_q\), the Dickson polynomial \(D_n(x,\alpha)\) is characterized by the functional equation
\begin{equation}\label{functional equation}
    D_n\left(\phi_{\alpha}(u),\alpha\right)
    =\phi_{\alpha^n}(u^n)
\end{equation}
where $\phi_{\beta}(u)=u+\frac{\beta}{u}$. The standard algebraic properties of Dickson polynomials, including their recurrence relations, explicit formulas, composition laws, and permutation criteria, are well known; see~\cites{LidlMullenTurnwald1993,Lidl,ChouGomezMullen1988,WANG2012814}. General background on finite fields and permutation polynomials can be found in~\cites{lidl1997finite,mullen2013handbook,Hou2015,YuanDing2014}.

The purpose of this paper is different from the existing literature that studies the functional graph of a fixed polynomial map on \(\mathbb F_q\). Instead, we study the sequential behavior of Dickson polynomial functions as the index \(n\) varies. More precisely, we determine the exact period of the sequence
\[
    \left[D_n(x,\alpha)\pmod{x^q-x}\right]_{n\geq 1}
\]
in the ring of polynomial functions \(\mathbb F_q[x]/(x^q-x)\).

Our first main result is the following exact periodicity theorem.

\begin{theorem}
\label{thm: exact period of Dickson polynomials}
Let \(q\) be a prime power and let \(\alpha\in \mathbb F_q^\times\). The period of the sequence
\[
    \left[D_n(x,\alpha)\pmod{x^q-x}\right]_{n\geq 1}
\]
is \(\frac{q^2-1}{2}\) if \(q\) is odd and \(\alpha\) is a square in \(\mathbb F_q\), and is \(q^2-1\) otherwise.
\end{theorem}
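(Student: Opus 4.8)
The plan is to reduce the statement to a problem about roots of unity via the classical functional identity for Dickson polynomials, and then to finish with an elementary $\ell$-adic valuation computation.

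First I would record the functional identity: for $y\ne 0$ in an algebraic closure of $\mathbb{F}_q$,
\[
D_n(y+\alpha y^{-1},\alpha)=y^{n}+\alpha^{n}y^{-n},
\]
which follows by induction on $n$ from the recursion~\eqref{recurssive relation} and the initial data $D_0=2$, $D_1=x$. As $\alpha\ne 0$, the recursion can be run backwards, $D_{n-2}=\alpha^{-1}(xD_{n-1}-D_n)$, so the sequence of pairs $\bigl(D_n\bmod (x^q-x),\,D_{n+1}\bmod (x^q-x)\bigr)_{n\ge 0}$ takes finitely many values and is \emph{purely} periodic. Hence the period of $[D_n(x,\alpha)\bmod (x^q-x)]_n$ is the least $N\ge 1$ with $D_N\equiv D_0$ and $D_{N+1}\equiv D_1$ modulo $x^q-x$, and (as shown next) the first of these congruences already forces the second; so the period equals $\min\{N\ge 1:\ D_N(x,\alpha)\equiv 2 \pmod{x^q-x}\}$.

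Next I would parametrise $\mathbb{F}_q$ by the set $S:=\mu_{q-1}\cup C_\alpha\subseteq\mathbb{F}_{q^2}^{\times}$, where $\mu_{q-1}=\mathbb{F}_q^{\times}$ and $C_\alpha:=\{y\in\mathbb{F}_{q^2}^{\times}:y^{q+1}=\alpha\}$. Every $u\in\mathbb{F}_q$ equals $y+\alpha y^{-1}$ for a root $y\in\mathbb{F}_{q^2}^{\times}$ of $T^{2}-uT+\alpha$, and such a root lies in $\mathbb{F}_q^{\times}$ when the quadratic splits and satisfies $y^{q+1}=y\cdot y^{q}=\alpha$ (its two roots being Frobenius conjugates) when it is irreducible; conversely $y\mapsto y+\alpha y^{-1}$ carries $S$ onto $\mathbb{F}_q$. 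By the functional identity, $D_N\equiv 2\pmod{x^q-x}$ means $y^{N}+\alpha^{N}y^{-N}=2$ for every $y\in S$. Taking $y=1$ gives $\alpha^{N}=1$, after which the condition reads $y^{N}+y^{-N}=2$, i.e. $y^{-N}(y^{N}-1)^{2}=0$ in odd characteristic and $y^{2N}=1$ in characteristic $2$; since $|\mathbb{F}_{q^2}^{\times}|=q^{2}-1$ is odd when $q$ is even, in \emph{both} cases this is just $y^{N}=1$. Conversely, $y^{N}=1$ for all $y\in S$ forces $\alpha^{N}=1$ (pick $y_0\in C_\alpha$: $\alpha^{N}=(y_0^{q+1})^{N}=1$), and then $D_{N+1}(y+\alpha y^{-1},\alpha)=y\cdot y^{N}+\alpha\alpha^{N}y^{-N-1}=y+\alpha y^{-1}$, so $D_{N+1}\equiv x$; this closes the circle of implications. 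Therefore the period equals $\min\{N\ge 1:y^{N}=1\text{ for all }y\in S\}=\lcm\bigl(\lcm_{y\in\mu_{q-1}}\ord(y),\ \lcm_{y\in C_\alpha}\ord(y)\bigr)$. The first inner $\lcm$ is $q-1$ because $\mu_{q-1}$ is cyclic of order $q-1$; and since $C_\alpha$ is a coset $y_0\mu_{q+1}$ of the cyclic norm-one group $\mu_{q+1}$, the relations $y^{N}=1$ for all $y\in C_\alpha$ amount to $(q+1)\mid N$ together with $y_0^{N}=1$, which (writing $N=(q+1)t$ and using $y_0^{q+1}=\alpha$) means $\ord(\alpha)\mid t$; hence $\lcm_{y\in C_\alpha}\ord(y)=(q+1)\ord(\alpha)$. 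Putting this together,
\[
\text{period of }[D_n(x,\alpha)\bmod (x^q-x)]_n=\lcm\bigl(q-1,\ (q+1)\ord(\alpha)\bigr).
\]

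It remains to evaluate this $\lcm$; write $d=\ord(\alpha)$, so $d\mid q-1$. For an odd prime $\ell$, $\ell$ divides at most one of $q-1$ and $q+1$, and if $\ell\mid q-1$ then $v_\ell(d)\le v_\ell(q-1)$; hence $v_\ell\bigl(\lcm(q-1,(q+1)d)\bigr)=\max\bigl(v_\ell(q-1),v_\ell(q+1)\bigr)=v_\ell(q^{2}-1)$ in every case. At $\ell=2$: if $q$ is even then $q^{2}-1$ is odd, so the period is $q^{2}-1$; if $q$ is odd, then one of $v_2(q-1),v_2(q+1)$ equals $1$, and since $\alpha$ is a square in $\mathbb{F}_q^{\times}$ precisely when $\ord(\alpha)\mid\tfrac{q-1}{2}$, i.e. precisely when $v_2(d)<v_2(q-1)$, a brief case check gives $v_2\bigl(\lcm(q-1,(q+1)d)\bigr)=v_2(q^{2}-1)-1$ if $\alpha$ is a square and $=v_2(q^{2}-1)$ otherwise. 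Assembling the prime-by-prime data, the period is $\tfrac{q^{2}-1}{2}$ when $2\nmid q$ and $\alpha$ is a square in $\mathbb{F}_q$, and $q^{2}-1$ in all other cases, as claimed. The step I expect to be the main obstacle is the middle one — converting the polynomial congruence $D_N\equiv 2$ into the clean group-theoretic condition ``$y^{N}=1$ for all $y\in S$'' uniformly in the characteristic, in particular handling the characteristic-$2$ point $y=\sqrt\alpha$ (where $y+\alpha y^{-1}=0$) and the ramified points $y=\pm\sqrt\alpha$ without disturbing the $\lcm$; once that reduction is in place, what remains is bookkeeping.
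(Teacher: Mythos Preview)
Your proof is correct and takes a genuinely different, cleaner route than the paper's. Both arguments begin the same way: use the functional identity $D_n(y+\alpha y^{-1},\alpha)=y^n+\alpha^n y^{-n}$ and the surjection $S=\mu_{q-1}\cup C_\alpha\twoheadrightarrow\mathbb{F}_q$ (the paper's $\mathcal{T}_\alpha$) to translate $D_N\equiv 2$ into a condition on $N$-th powers. Where you diverge is in extracting the exact period. The paper splits into an upper bound (Propositions~4.1 and~4.3, Lemmas~4.2 and~4.4) and a separate lower bound (Proposition~4.5), and for the lower bound it relies on an auxiliary constructive lemma (Lemma~2.4) to produce an element $u\in C_\alpha$ whose order is divisible by $q+1$; the nonsquare case then requires an additional ad~hoc check with a specific nonsquare $u$. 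You instead observe directly that $C_\alpha=y_0\mu_{q+1}$ is a coset, so ``$y^N=1$ for all $y\in C_\alpha$'' is exactly $(q+1)\ord(\alpha)\mid N$; this yields the closed formula $\mathrm{period}=\lcm\bigl(q-1,(q+1)\ord(\alpha)\bigr)$ in one stroke, and the square/nonsquare dichotomy then falls out of a short $2$-adic valuation check. Your argument is shorter, avoids Lemma~2.4 entirely, and the intermediate formula $\lcm(q-1,(q+1)\ord(\alpha))$ is strictly more informative than the final statement---it tells you the period for every $\alpha$, not just the two extreme values. The paper's approach, on the other hand, is more hands-on and dovetails with its later Section~5 material (Lemma~5.2), where the same element-by-element analysis of $\mathcal{T}_\alpha$ is reused.
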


We use this theorem and Equation~\eqref{functional equation} to study of composition dynamics when \(\alpha^n=\alpha\). Under this assumption, Equation~\eqref{functional equation} can be represented by the commutative diagram
\begin{equation}\label{commutative diagram}
\begin{tikzcd}
\mathcal T_{\alpha} \arrow{r}{u\mapsto u^n} \arrow[swap]{d}{\phi_{\alpha}}
&
\mathcal T_{\alpha} \arrow{d}{\phi_{\alpha}}
\\
\mathbb F_q \arrow{r}{D_n(x,\alpha)}
&
\mathbb F_q
\end{tikzcd}
\end{equation}
where $\mathcal T_\alpha=\phi_\alpha^{-1}(\mathbb F_q)$. This diagram expresses the Dickson polynomial dynamics as a quotient of the power map \(u\mapsto u^n\). If \(m\) and \(n\) both satisfy \(\alpha^m=\alpha^n=\alpha\), then the Dickson composition law gives
\[
    D_m(D_n(x,\alpha),\alpha)
    =
    D_{mn}(x,\alpha)
    =
    D_n(D_m(x,\alpha),\alpha).
\]
Thus composition of Dickson polynomials corresponds to the multiplication of their indices.

For notational convenience, we write
\[
    D_{n,\alpha}(x)=D_n(x,\alpha).
\]
Let \(\pi_q(\alpha)\) denote the period of the sequence
\[
    \left[D_n(x,\alpha)\pmod{x^q-x}\right]_{n\geq 1}
\]
given in Theorem~\ref{thm: exact period of Dickson polynomials}. Then the map
\begin{equation}\label{the map}
    f:n \longmapsto D_{n,\alpha}(x)\pmod{x^q-x}
\end{equation}
is compatible with multiplication of indices modulo \(\pi_q(\alpha)\) and composition of polynomial functions. In the permutation case, namely when $\gcd(n,q^2-1)=1$, $f$ is a group homomorphism from a subgroup of $\left(\mathbb Z/\pi_q(\alpha)\mathbb Z\right)^\times$ to the group of permutations of \(\mathbb F_q\) generated by Dickson polynomial functions. We determine this homomorphism explicitly.

\begin{theorem}\label{thm: group structure}
Let \(\alpha\in\mathbb F_q^\times\), and let
\[
    \mathcal D_{\alpha}
    =
    \left\{
        D_{n,\alpha}(x)\pmod{x^q-x}
        \mid
        \alpha^n=\alpha\text{ and }\gcd(n,q^2-1)=1
    \right\}.
\]
Then \(\mathcal D_{\alpha}\) is a group under composition. Moreover, let $U_\alpha=\{n\in(\mathbb{Z}/\pi_q(\alpha)\mathbb Z)^\times|n\equiv 1\pmod r\}$ where $r$ is the order of $\alpha$ in $\mathbb F_q^\times\setminus\{1\}$. Then, \(\mathcal D_{\alpha}\) is isomorphic to
\[
\begin{cases}
    (\mathbb Z/\pi_q(1)\mathbb Z)^\times/H_1,
        &\alpha=1;\\[4pt]
    U_\alpha/H_2,
        &\text{otherwise,}
\end{cases}
\]
where \(H_1\cong \mathbb Z/2\mathbb Z\oplus \mathbb Z/2\mathbb Z\) and \(H_2\cong \mathbb Z/2\mathbb Z\).
\end{theorem}

Although our main results are stated in terms of the eventual periodicity of sequences of polynomial functions, they also have a direct interpretation in terms of functional graphs. Fix \(n\), and suppose that \(\mathscr l\) and \(\mathscr k\) are chosen minimally so that
\[
D_{n,\alpha}^{\mathscr l+\mathscr k}(x)\equiv D_{n,\alpha}^{\mathscr l}(x)\pmod{x^q-x},
\]
where the superscript denotes iteration. Then \(\mathscr l\) is the maximal preperiodic length among all points of \(\mathbb F_q\), and \(\mathscr k\) is the least common multiple of the cycle lengths in the functional graph of \(D_{n,\alpha}\). Thus \(\mathscr l+\mathscr k\) is a natural global rho-length parameter for \(D_{n,\alpha}\), while the pointwise rho length \(\rho(x)\) may be smaller for individual points.

During the investigation of this topic, we also obtain a new symmetry identity for the coefficients of Dickson polynomials.

\begin{theorem}\label{main theorem: 1}
Let \(q\) be a power of an odd prime and let \(\alpha \in\mathbb F_q^\times\). Then
\[
    x^{q^2+1}\left(D_{q^2-1}\left(x^{-1},\alpha\right)-2\right)
    =
    (-4\alpha)^{-1}
    \left(D_{q^2-1}\left(x,(16\alpha)^{-1}\right)-2\right).
\]
Furthermore, if \(\alpha\) is a square in \(\mathbb F_q^\times\), then
\[
    x^{\frac{q^2+1}{2}}
    D_{\frac{q^2-1}{2}}\left(x^{-1},\alpha\right)
    =
    2D_{\frac{q^2+1}{2}}\left(x,(16\alpha)^{-1}\right).
\]
\end{theorem}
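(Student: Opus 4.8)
The plan is to prove both identities at once by a substitution that trivializes the Dickson polynomials through the functional equation~\eqref{functional equation}, reducing each claim to an identity in the rational function field $\mathbb{F}_{q^{2}}(w)$ that is then dispatched by the Frobenius map. Fix a square root $\beta$ of $\alpha$ in $\overline{\mathbb{F}_q}$; it lies in $\mathbb{F}_{q^{2}}^{\times}$ in all cases, and in $\mathbb{F}_q^{\times}$ precisely when $\alpha$ is a square in $\mathbb{F}_q$ (the extra hypothesis of the second identity). Fix also $i\in\mathbb{F}_{q^{2}}^{\times}$ with $i^{2}=-1$, which exists since $q$ is odd. Substitute
\[
x=\frac{w}{\beta(w^{2}+1)},\qquad\text{so that}\qquad x^{-1}=\beta\bigl(w+w^{-1}\bigr).
\]
Since $w\mapsto w/(\beta(w^{2}+1))$ is non-constant it induces an injective $\mathbb{F}_{q^{2}}$-algebra map $\mathbb{F}_{q^{2}}[x]\hookrightarrow\mathbb{F}_{q^{2}}(w)$, and all four sides of the two claimed identities lie in $\mathbb{F}_q[x]\subseteq\mathbb{F}_{q^{2}}[x]$ (they are honest polynomials, not Laurent polynomials, because $q$ odd forces the constant term $2(-\alpha)^{(q^{2}-1)/2}$ of $D_{q^{2}-1}(\cdot,\alpha)$, and likewise that of $D_{q^{2}-1}(\cdot,(16\alpha)^{-1})$, to equal $2$). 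Hence it suffices to verify each identity after this substitution.

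For the left-hand sides, apply~\eqref{functional equation} with $u=\beta w$: one gets $D_{m}(x^{-1},\alpha)=\beta^{m}\bigl(w^{m}+w^{-m}\bigr)$ for every $m\ge 0$. Substituting and simplifying with $\beta^{q^{2}-1}=1$, $\beta^{q^{2}+1}=\alpha$ (and $\beta^{(q^{2}-1)/2}=1$ in the second case, where $\beta\in\mathbb{F}_q^{\times}$) yields
\[
x^{q^{2}+1}\bigl(D_{q^{2}-1}(x^{-1},\alpha)-2\bigr)=\frac{(w^{q^{2}}-w)^{2}}{\alpha\,(w^{2}+1)^{q^{2}+1}},\qquad
x^{\frac{q^{2}+1}{2}}D_{\frac{q^{2}-1}{2}}(x^{-1},\alpha)=\frac{w^{q^{2}}+w}{\beta\,(w^{2}+1)^{(q^{2}+1)/2}}.
\]

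For the right-hand sides, put $\gamma=(4\beta)^{-1}$, so $\gamma^{2}=(16\alpha)^{-1}$, and solve $v^{2}-xv+\gamma^{2}=0$. Its discriminant $x^{2}-4\gamma^{2}$ equals $\bigl(\tfrac{i(w^{2}-1)}{2\beta(w^{2}+1)}\bigr)^{2}$, a square in $\mathbb{F}_{q^{2}}(w)$, so one root is
\[
v=\frac{i(w-i)}{4\beta(w+i)},\qquad\text{and it satisfies}\qquad v+(16\alpha)^{-1}v^{-1}=x .
\]
Applying~\eqref{functional equation} again gives $D_{k}\bigl(x,(16\alpha)^{-1}\bigr)=v^{k}+(16\alpha)^{-k}v^{-k}$, and since $\beta,\gamma,i\in\mathbb{F}_{q^{2}}^{\times}$ (with $\beta,\gamma\in\mathbb{F}_q^{\times}$ in the second case) and $8\mid q^{2}-1$, the powers of $\beta$, $\gamma$ and $i$ involved are trivial to evaluate, and the expressions collapse via the Frobenius identity $(w\pm i)^{q^{2}}=w^{q^{2}}\pm i$ (valid because $i^{q^{2}}=i$). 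Concretely, $(w-i)^{q^{2}}(w+i)-(w-i)(w+i)^{q^{2}}=2i(w^{q^{2}}-w)$ turns $(-4\alpha)^{-1}\bigl(D_{q^{2}-1}(x,(16\alpha)^{-1})-2\bigr)$ into $\tfrac{(w^{q^{2}}-w)^{2}}{\alpha(w^{2}+1)^{q^{2}+1}}$, while $(w-i)^{q^{2}+1}-(w+i)^{q^{2}+1}=-2i(w^{q^{2}}+w)$ turns $2D_{(q^{2}+1)/2}\bigl(x,(16\alpha)^{-1}\bigr)$ into $\tfrac{w^{q^{2}}+w}{\beta(w^{2}+1)^{(q^{2}+1)/2}}$ (using $4\gamma=\beta^{-1}$). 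These match the left-hand sides, so both identities follow.

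The exponent bookkeeping and the algebraic simplifications are routine. The one step that takes an idea is producing the auxiliary parameter $v$: the natural substitution for the left-hand side forces $x=w/(\beta(w^{2}+1))$, and the heart of the theorem is that this same $x$ is again of the shape $v+(16\alpha)^{-1}v^{-1}$ for the explicit M\"obius transform $v=i(w-i)/(4\beta(w+i))$ of $w$ — this is exactly what dictates the constant $16\alpha$ and makes the two computations close. Once $v$ is in hand, everything else is governed by $\mathrm{Frob}_{q^{2}}$.
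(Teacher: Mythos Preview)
Your proof is correct and takes a genuinely different route from the paper's. The paper proves the theorem by a direct coefficient-by-coefficient comparison: it expands both sides using Definition~\ref{def: Dickson polynomial}, and matches the coefficients via two technical lemmas (Lemmas~3.1 and~3.3) that are themselves established by careful case analysis with the generalized Lucas theorem. This is a purely combinatorial argument in which the constant $16\alpha$ appears only as the value that makes the binomial-coefficient identities balance.

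Your approach is algebraic and conceptual: you exploit the functional equation~\eqref{functional equation} twice, once for each side, via the single substitution $x=w/(\beta(w^{2}+1))$. The key observation---that this same $x$ is simultaneously $\bigl(\beta(w+w^{-1})\bigr)^{-1}$ and $v+(16\alpha)^{-1}v^{-1}$ for the explicit M\"obius image $v=i(w-i)/(4\beta(w+i))$---is what \emph{explains} the constant $16\alpha$, rather than merely verifying it. After that, both identities collapse to short computations with $(w\pm i)^{q^{2}}=w^{q^{2}}\pm i$, and the injectivity of $\mathbb{F}_{q^{2}}[x]\hookrightarrow\mathbb{F}_{q^{2}}(w)$ transports the result back to $\mathbb{F}_q[x]$. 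Your argument is considerably shorter and uncovers the mechanism behind the symmetry; the paper's approach, though longer, has the complementary virtue of making the underlying binomial-coefficient congruences explicit, which feeds into the ``rotation'' tables in Section~3.

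One small remark: your parenthetical about the constant term of $D_{q^{2}-1}(\cdot,\alpha)$ equalling $2$ is not actually needed to see that the left-hand sides are polynomials (multiplying by $x^{q^{2}+1}$, respectively $x^{(q^{2}+1)/2}$, already clears all negative powers regardless), so you could drop that aside without loss.
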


This identity reflects a nontrivial symmetry in the coefficients of Dickson polynomials and motivates the coefficient-comparison viewpoint used in the algorithmic appendix.

The paper is organized as follows. Section~2 recalls the necessary preliminaries from finite fields, combinatorics, and elementary number theory used in the proofs. Sections~3 and~4 establish Theorems~\ref{thm: exact period of Dickson polynomials} and~\ref{thm: group structure}. Section~5 proves the coefficient symmetry identity in Theorem~\ref{main theorem: 1}, and leave the algorithms in Appendix.

\section{Preliminary}
Throughout this paper, let $q$ be a power of a prime $p$, and let $\alpha\in\mathbb{F}_q^\times$. We begin with a generalized version of Lucas' theorem, whose special case where $q=p$ reduces to the classical Lucas's theorem; related extensions and applications, can be found in
\cite{mestrovic2014}.

\begin{proposition}[A Generalization of Lucas's Theorem]\label{Lucas}
    For non-negative integer $m$, $n$, write
    \[
    m = \sum_{i=0}^{k} m_iq^i\quad \textrm{and} \quad n = \sum_{i=0}^kn_iq^i
    \]
    where $0 \leq m_i,n_i \leq q-1$ for $0\leq i \leq k$. Then
    \[
        \binom  {m}
                {n}
    \equiv
        \prod_{i=0}^k
            \binom  {m_i}
                    {n_i}
        \pmod{p}.
    \]
\end{proposition}

\begin{proof}
The result follows immediately by expressing each $q$-ary digit $m_i$ and $n_i$ in their respective $p$-adic expansions and applying Lucas's Theorem.
\end{proof}

We next prove a simple number-theoretic lemma that will be used later.

\begin{lemma}
    \label{lem: find k}
    If $M$, $N$ are positive integers that satisfy $1 \leq M < N$, then we have a positive integer $k$ such that $\gcd(M + k(N-1), N^2-1) \mid (N-1)$.
\end{lemma}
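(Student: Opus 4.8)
The goal is to find $k$ so that $\gcd(M+k(N-1),\,N^2-1)$ divides $N-1$. Since $N^2-1=(N-1)(N+1)$, the plan is to control the two factors separately: any common divisor of $M+k(N-1)$ and $N-1$ automatically divides $N-1$, so the only obstruction is the factor $N+1$. Thus it suffices to find $k$ with $\gcd\bigl(M+k(N-1),\,N+1\bigr)$ dividing $\gcd(N-1,N+1)$, which is $1$ or $2$; even better, it suffices to arrange $\gcd(M+k(N-1),\,N+1)=1$, because then every common divisor of $M+k(N-1)$ and $(N-1)(N+1)$ must already divide $N-1$. I would first reduce modulo $N+1$: since $N\equiv -1\pmod{N+1}$, we have $N-1\equiv -2\pmod{N+1}$, so $M+k(N-1)\equiv M-2k\pmod{N+1}$. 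Hence I need $k$ with $\gcd(M-2k,\,N+1)=1$.

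**Key steps.** (1) Rephrase the problem: produce $k$ such that $M-2k$ is a unit modulo $N+1$. (2) Observe that as $k$ ranges over a complete residue system mod $N+1$, the value $M-2k$ ranges over all residues if $N+1$ is odd, and over residues of a fixed parity (namely that of $M$) if $N+1$ is even. (3) In the odd case, simply choose $k$ with $M-2k\equiv 1\pmod{N+1}$ and take a positive representative; then $\gcd(M+k(N-1),N+1)=1$ and we are done. (4) In the even case $N+1$ even — i.e. $N$ odd — we must handle parity. If $M$ is odd, then $M-2k$ is always odd, so it is coprime to the power of $2$ in $N+1$; I would then choose $k$ by CRT to make $M-2k\equiv 1$ modulo the odd part of $N+1$, giving $\gcd=1$ again. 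If $M$ is even and $N$ is odd, then $M-2k$ is always even while $N+1$ is even, so $2\mid\gcd(M+k(N-1),N+1)$ unavoidably; but $2\mid N+1$ and $N$ odd forces $2\nmid N-1$, wait — $N$ odd gives $N-1$ even, so actually I should note $\gcd(N-1,N+1)=2$ here, and it suffices to get the $\gcd$ down to exactly $2$, which divides $N-1$. So in this subcase I choose $k$ via CRT so that $M-2k\equiv 2\pmod{2^a}$ (where $2^a\|N+1$) is impossible to improve, but $M-2k$ coprime to the odd part of $N+1$; then $\gcd(M+k(N-1),N+1)\in\{1,2\}$, which divides $N-1$. (5) Finally, since we only need some positive integer $k$, replace any chosen residue class representative by a positive one by adding a multiple of $N+1$.

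**Main obstacle.** The only real subtlety is the parity bookkeeping when $N$ is odd: then $N+1$ and $N-1$ are both even, $\gcd(N-1,N+1)=2$, and $M-2k$ has parity locked to that of $M$, so one cannot always force the gcd with $N+1$ to be $1$ — but one can always force it to lie in $\{1,2\}$, and $2\mid N-1$ in exactly this case, so the conclusion still holds. I would organize the proof as: handle the odd part of $N+1$ by CRT to kill all odd prime factors, then separately check that the $2$-part contributes at most a factor of $2$, which is harmless because it divides $N-1$. I do not expect the argument to require more than elementary CRT plus this case split; no deep input is needed.
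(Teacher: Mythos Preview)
Your overall strategy---reduce modulo $N+1$ via $M+k(N-1)\equiv M-2k$, then kill the odd prime factors of $N+1$ by CRT---is sound and matches the paper's approach (the paper simply writes down explicit $k$ in each case rather than invoking CRT abstractly). However, there is a genuine gap in your treatment of the prime $2$ when $N$ is odd and $M$ is even.

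You assert that it suffices to arrange $\gcd(M+k(N-1),N+1)\in\{1,2\}$, since $2\mid N-1$. This inference is false. Knowing $\gcd(A,N+1)=2$ does \emph{not} imply $\gcd(A,(N-1)(N+1))\mid N-1$: what you actually need, prime by prime, is $v_2(A)\le v_2(N-1)$, and the gcd with $N+1$ only bounds $v_2(A)$ by $v_2(N+1)$. When $N\equiv 1\pmod 4$ you have $v_2(N+1)=1$, so $\gcd(A,N+1)=2$ tells you nothing beyond $A$ being even, while $v_2(N-1)\ge 2$ must still be respected. Concretely, take $M=4$, $N=5$, $k=1$: then $M+k(N-1)=8$, $\gcd(8,6)=2$ (so your criterion is satisfied), yet $\gcd(8,24)=8\nmid 4=N-1$.

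The paper's proof confronts exactly this in its Case~4 ($M$ even, $N\equiv 1\bmod 4$): it tracks $v_2(M+k(N-1))$ directly, splitting on whether $v_2(M)=v_2(N-1)$ or not, and chooses $k$ so that $v_2(M+k(N-1))\le v_2(N-1)$. Your CRT framework can be repaired the same way: in addition to making $M-2k$ coprime to the odd part of $N+1$, impose a parity condition on $k$ (namely $k$ odd if $v_2(M)>v_2(N-1)$, $k$ even if $v_2(M)=v_2(N-1)$; no condition if $v_2(M)<v_2(N-1)$) to force $v_2(M+k(N-1))\le v_2(N-1)$. Since this is a condition modulo $2$ and the other is modulo the odd part of $N+1$, CRT still applies.
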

\begin{proof}
    Write
    \[
    N^2-1=(N-1)(N+1).
    \]
    Let \(q_1,\dots,q_s\) be the distinct odd prime divisors of \(N+1\) that do not divide \(N-1\). Since
    \[
    \gcd(N-1,N+1)\mid 2,
    \]
    every odd prime divisor of \(N+1\) is relatively prime to \(N-1\). Hence \(N-1\) is invertible modulo each \(q_j\).
    
    We will choose \(k\) so that
    \[
    v_2(M+k(N-1))\leq v_2(N-1)\qquad\text{and}\qquad v_{q_j}(M+k(N-1))=0
    \quad
    \text{for all }1\leq j\leq s.
    \]
    These two conditions imply
    \[
    \gcd(M+k(N-1),N^2-1)\mid N-1.
    \]
    For each \(j\), impose the congruence
    \[
    k\equiv (1-M)(N-1)^{-1}\pmod{q_j}.
    \]
    Then
    \[
    M+k(N-1)\equiv 1\pmod{q_j},
    \]
    so \(q_j\nmid M+k(N-1)\).
    
    It remains to impose a congruence modulo \(2\) in order to control the \(2\)-adic valuation. If \(v_2(M)\neq v_2(N-1)\), we impose $k\equiv 1\pmod 2$. Then \(v_2(k)=0\), so
    \[
    v_2(M+k(N-1))=\min\{v_2(M),v_2(k(N-1))\}\leq v_2(N-1).
    \]
    
    If \(v_2(M)=v_2(N-1)\), we instead impose $k\equiv 0\pmod 2$. Then \(v_2(k(N-1))\geq v_2(N-1)+1\), and therefore
    \[
    v_2(M+k(N-1))=v_2(M)=v_2(N-1).
    \]
    
    Thus, in either case, we obtain a system of congruences of the form
    \[
    \begin{cases}
    k\equiv \varepsilon \pmod 2,\\
    k\equiv (1-M)(N-1)^{-1}\pmod{q_j},\qquad 1\leq j\leq s,
    \end{cases}
    \]
    where \(\varepsilon\in\{0,1\}\) is chosen according to the two cases above. The moduli
    \[
    2,q_1,\dots,q_s
    \]
    are pairwise coprime, so the Chinese remainder theorem guarantees that this system has an integer solution $k$. And, this $k$ meets the two conditions.
\end{proof}
Next, we record some properties about Equation~\eqref{functional equation}.

\begin{lemma}
    \label{lem: u^2 + au^-2 = b}
    Suppose $2\nmid q$ and $\alpha \in \mathbb{F}_q^\times$ is a square. Then, for $\beta\in \mathbb{F}_q^\times$, there is $u\in \mathbb{F}_{q^2}$ such that $u^2 + \alpha u^{-2} = \beta.$
\end{lemma}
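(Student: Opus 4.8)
The plan is to pass to the substitution $w = u^2$. With this substitution the target equation $u^2 + \alpha u^{-2} = \beta$ becomes $w + \alpha w^{-1} = \beta$, i.e. $w^2 - \beta w + \alpha = 0$; in the language of the diagram~\eqref{commutative diagram} this just asks for a point $w\in\mathcal{T}_\alpha$ with $\phi_\alpha(w)=\beta$, which obviously exists over $\mathbb{F}_{q^2}$ — the real content is to pin down such a $w$ that is moreover a \emph{square} in $\mathbb{F}_{q^2}^\times$. Concretely, $w^2 - \beta w + \alpha$ is a quadratic with coefficients in $\mathbb{F}_q$, so both of its roots $w_1,w_2$ lie in $\mathbb{F}_{q^2}$; since $\beta \neq 0$ they are not both zero, say $w_1 \neq 0$, and Vieta's formulas give $w_1 + w_2 = \beta$ and $w_1 w_2 = \alpha$, whence $\alpha w_1^{-1} = w_2$. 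Consequently, if $w_1$ is a square in $\mathbb{F}_{q^2}^\times$, then choosing $u \in \mathbb{F}_{q^2}$ with $u^2 = w_1$ gives $u^2 + \alpha u^{-2} = w_1 + \alpha w_1^{-1} = w_1 + w_2 = \beta$, as desired. So the whole lemma reduces to showing that $w_1$ is a square in $\mathbb{F}_{q^2}^\times$.

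For this I would use the standard fact that $c \in \mathbb{F}_{q^2}^\times$ is a square if and only if $c^{(q^2-1)/2} = 1$, together with the identity $c^{(q^2-1)/2} = \bigl(c^{q+1}\bigr)^{(q-1)/2}$; since $c^{q+1}$ is the norm of $c$ down to $\mathbb{F}_q^\times$, this says that $c$ is a square in $\mathbb{F}_{q^2}^\times$ iff $c^{q+1}$ is a square in $\mathbb{F}_q^\times$. Now split into two cases. If $w_1 \in \mathbb{F}_q$, then $w_1^{q+1} = w_1^2$ is trivially a square in $\mathbb{F}_q^\times$, so $w_1$ is a square in $\mathbb{F}_{q^2}^\times$. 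If $w_1 \notin \mathbb{F}_q$, then $\mathbb{F}_q(w_1) = \mathbb{F}_{q^2}$, so the degree-two polynomial $w^2 - \beta w + \alpha \in \mathbb{F}_q[w]$ is the minimal polynomial of $w_1$; its two roots are then the Galois conjugates $w_1$ and $w_1^q$, forcing $w_2 = w_1^q$ and hence $w_1^{q+1} = w_1 w_2 = \alpha$, which is a square in $\mathbb{F}_q^\times$ precisely by the hypothesis on $\alpha$. In either case $w_1$ is a square in $\mathbb{F}_{q^2}^\times$, and the lemma follows.

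The main obstacle — and the only point where the assumption that $\alpha$ is a square in $\mathbb{F}_q$ is genuinely used — is the case $w_1 \notin \mathbb{F}_q$: one cannot simply ``extract a square root in $\mathbb{F}_{q^2}$'', since $\mathbb{F}_{q^2}^\times$ contains nonsquares, and the crux is that the norm of $w_1$ is exactly $\alpha$, so the squaring obstruction is killed by the hypothesis. (When instead $w^2 - \beta w + \alpha$ splits over $\mathbb{F}_q$, both roots already lie in $\mathbb{F}_q^\times$, and every element of $\mathbb{F}_q^\times$ is a square in $\mathbb{F}_{q^2}^\times$ because $q - 1 \mid (q^2-1)/2$, so there is nothing to prove.) If one prefers to avoid the norm criterion, an equivalent route is to write $\alpha = \gamma^2$ with $\gamma \in \mathbb{F}_q$, observe that $(u \pm \gamma u^{-1})^2 = (u^2 + \alpha u^{-2}) \pm 2\gamma$, and solve the resulting monic quadratic in $u$ — whose roots automatically lie in $\mathbb{F}_{q^2}$ — whenever $\beta + 2\gamma$ or $\beta - 2\gamma$ is a square in $\mathbb{F}_q$; if neither is, their product $\beta^2 - 4\alpha$ is a nonzero square in $\mathbb{F}_q$, which forces both roots $\tfrac{1}{2}\bigl(\beta \pm \sqrt{\beta^2 - 4\alpha}\bigr)$ of $w^2 - \beta w + \alpha$ into $\mathbb{F}_q^\times$, returning us to the easy split case. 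I would present the first version as the main proof, since it is the shortest.
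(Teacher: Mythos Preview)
Your proof is correct, and your main argument is genuinely different from the paper's. The paper writes $\alpha=\gamma^2$ with $\gamma\in\mathbb{F}_q$ and simply exhibits
\[
u=\frac{\sqrt{\beta+2\gamma}-\sqrt{\beta-2\gamma}}{2}\in\mathbb{F}_{q^2},
\]
then verifies by direct computation that $u^2=\tfrac{1}{2}\bigl(\beta-\sqrt{\beta^2-4\gamma^2}\bigr)\neq 0$ and $u^2+\alpha u^{-2}=\beta$. Your approach is instead structural: you reduce to showing that a root $w_1$ of $w^2-\beta w+\alpha$ is a square in $\mathbb{F}_{q^2}^\times$, and you settle this via the norm criterion, observing that either $w_1\in\mathbb{F}_q^\times$ or $w_1^{q+1}=w_1w_1^{q}=w_1w_2=\alpha$. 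This is precisely the dichotomy of Lemma~\ref{lemma: order of element in T_gamma} for $\mathcal{T}_\alpha$, so your argument meshes nicely with the paper's later machinery and isolates exactly where the hypothesis ``$\alpha$ is a square'' enters. The paper's route is shorter and fully constructive; yours is more conceptual and arguably cleaner (it also handles $\alpha=0$ without a separate check, since then both roots lie in $\mathbb{F}_q$). Your closing ``alternative route'' via $(u\pm\gamma u^{-1})^2=\beta\pm 2\gamma$ is in fact the identity underlying the paper's explicit formula, so that variant is essentially the paper's proof in disguise.
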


\begin{proof}
    Let $\alpha = \gamma^2$ for some $\gamma\in \mathbb{F}_q^\times$, and let
    $u = \left(\sqrt{\beta+2\gamma}-\sqrt{\beta-2\gamma}\right)/{2}\in \mathbb{F}_{q^2}^\times.$
    Then, $u^2\neq 0$
    and
    \[
        u^2 + \alpha u ^ {-2} 
        = \frac{\beta - \sqrt{\beta^2 - 4\gamma^2}}{2} + \frac{2\gamma^{2}}{\beta - \sqrt{\beta^2 - 4\gamma^2}}
        = \beta.
    \]
\end{proof}

\begin{lemma}\label{lem: size-of-preimage}
    Suppose $t\in \mathbb{F}_q$ and $\alpha\in \mathbb{F}_q$, we have $1\leq|\phi_{\alpha}^{-1}(\{t\})|\leq 2$.
\end{lemma}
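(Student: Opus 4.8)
The plan is to translate the fiber condition into a quadratic equation over $\mathbb{F}_q$ and to count its roots in $\mathbb{F}_{q^2}$. First I would observe that, for $u\in\mathbb{F}_{q^2}^\times$, multiplying $\phi_\gamma(u)=u+\gamma u^{-1}=t$ by $u$ shows that $u\in\phi_\gamma^{-1}(\{t\})$ if and only if $u$ is a root of
\[
f(X)\coloneqq X^2-tX+\gamma\in\mathbb{F}_q[X].
\]
Conversely, every root $u\in\mathbb{F}_{q^2}$ of $f$ is automatically nonzero when $\gamma\neq 0$, since the product of the two roots of $f$ equals $\gamma$; hence, assuming $\gamma\in\mathbb{F}_q^\times$ (the degenerate case $\gamma=0$ gives $\phi_0=\mathrm{id}$ and is immediate on $\mathbb{F}_q^\times$), the fiber $\phi_\gamma^{-1}(\{t\})$ is exactly the set of roots of $f$ lying in $\mathbb{F}_{q^2}$.

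The upper bound then comes for free: a nonzero polynomial of degree $2$ has at most two distinct roots in any field, so $|\phi_\gamma^{-1}(\{t\})|\leq 2$. For the lower bound I would invoke that $\mathbb{F}_{q^2}$ is a degree-$2$ extension of $\mathbb{F}_q$: if $f$ is reducible over $\mathbb{F}_q$ it already has a root in $\mathbb{F}_q\subseteq\mathbb{F}_{q^2}$, while if $f$ is irreducible over $\mathbb{F}_q$ then $\mathbb{F}_q[X]/(f(X))$ is a field of order $q^2$, hence isomorphic to $\mathbb{F}_{q^2}$, so $f$ again has a root in $\mathbb{F}_{q^2}$. In either case $f$ has at least one root $u\in\mathbb{F}_{q^2}$, which as noted must be nonzero, so $u\in\mathbb{F}_{q^2}^\times$ and $|\phi_\gamma^{-1}(\{t\})|\geq 1$. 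Combining the two estimates gives $1\leq|\phi_\gamma^{-1}(\{t\})|\leq 2$.

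I do not expect a genuine obstacle here: the whole content is the remark that $\phi_\gamma$ "is" the quadratic $f$ and that every quadratic over $\mathbb{F}_q$ splits in $\mathbb{F}_{q^2}$. The only points that need a little care are the bookkeeping around $0$ (checking the roots are nonzero and disposing of $\gamma=0$) and phrasing the fiber as a \emph{set} of roots rather than a multiset, so that a repeated root — which occurs precisely when the discriminant $t^2-4\gamma$ vanishes — is counted exactly once, still consistently with the claimed bounds. If one prefers a characteristic-free one-line argument for the lower bound, one can instead note that every element of $\mathbb{F}_q$ is a square in $\mathbb{F}_{q^2}$, so the discriminant of $f$ is a square in $\mathbb{F}_{q^2}$ and $f$ splits there; but the extension-degree argument above has the advantage of not separating the characteristic-two case.
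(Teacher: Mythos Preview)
Your proof is correct and follows the same approach as the paper: identify the fiber $\phi_\gamma^{-1}(\{t\})$ with the root set of the quadratic $X^2 - tX + \gamma$ in $\mathbb{F}_{q^2}$. The paper's own argument is a one-line version of this, simply noting the correspondence and asserting the conclusion, whereas you spell out the lower bound (existence of a root in the degree-two extension) and the bookkeeping around $u\neq 0$ more carefully.
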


\begin{proof}
    If $u\in\phi_{\alpha}^{-1}(\{t\})$ for some $t \in \mathbb{F}_q$, then $u$ is a root of 
    $x^2 - t x + \alpha$ in $\mathbb{F}_{q^2}$. Thus, $1\leq |\phi_{\alpha}^{-1}(\{t\})| \leq~2$.
\end{proof}

We recall the following result from~\cite{LidlMullenTurnwald1993}*{Lemma 3.5} to describe the elements in $\mathcal{T}_\alpha.$
\begin{lemma}\label{lemma: order of element in T_gamma}
    We have $u\in \mathcal{T}_{\alpha}$ if and only if $u^{q+1} = \alpha$ or $u\in \mathbb{F}_q^\times$.
\end{lemma}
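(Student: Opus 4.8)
The plan is to unwind the definition $\mathcal{T}_\gamma=\phi_\gamma^{-1}(\mathbb{F}_q^\times)$ and then exploit that $\mathbb{F}_q$ is precisely the fixed field of the Frobenius automorphism $\sigma\colon x\mapsto x^q$ of $\mathbb{F}_{q^2}$. By definition, for $u\in\mathbb{F}_{q^2}^\times$ one has $u\in\mathcal{T}_\gamma$ exactly when $\phi_\gamma(u)=u+\gamma u^{-1}$ lies in $\mathbb{F}_q^\times$; since $\phi_\gamma(u)$ automatically lies in $\mathbb{F}_{q^2}$, the content splits into (i) deciding when $\phi_\gamma(u)\in\mathbb{F}_q$, i.e.\ when $\phi_\gamma(u)^q=\phi_\gamma(u)$, and (ii) the side condition $\phi_\gamma(u)\neq 0$.

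For the forward direction of (i) I would compute $\phi_\gamma(u)^q$ directly. Using $\gamma\in\mathbb{F}_q$ (so $\gamma^q=\gamma$) and $u\neq 0$, one gets $\phi_\gamma(u)^q=u^q+\gamma u^{-q}$; equating this with $u+\gamma u^{-1}$ and rearranging gives
\[
u^q-u \;=\; \gamma\bigl(u^{-1}-u^{-q}\bigr) \;=\; \gamma\cdot\frac{u^q-u}{u^{q+1}},
\]
hence $(u^q-u)\bigl(u^{q+1}-\gamma\bigr)=0$. Therefore $\phi_\gamma(u)\in\mathbb{F}_q$ forces either $u^q=u$, which for $u\neq0$ means $u\in\mathbb{F}_q^\times$, or $u^{q+1}=\gamma$; this yields the implication from $u\in\mathcal{T}_\gamma$ to the right-hand condition. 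The converse inclusions into $\mathbb{F}_q$ are immediate: if $u\in\mathbb{F}_q^\times$ then $u+\gamma u^{-1}\in\mathbb{F}_q$ trivially, and if $u^{q+1}=\gamma$ then $\gamma u^{-1}=u^q$, so $\phi_\gamma(u)=u+u^{q}$ is manifestly $\sigma$-invariant and hence lies in $\mathbb{F}_q$.

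The part that needs genuine care — and what I expect to be the main obstacle — is the nonvanishing condition (ii), since $\mathcal{T}_\gamma$ is the preimage of $\mathbb{F}_q^\times$, not of $\mathbb{F}_q$. Here $\phi_\gamma(u)=0$ is equivalent to $u^2=-\gamma$, so the only elements at which the desired equivalence could be affected are the at most two square roots of $-\gamma$ in $\mathbb{F}_{q^2}^\times$; for the reverse direction I would check these explicitly, noting that such a $u$ satisfies $u^q=-u$ and $u^{q+1}=-u^2=\gamma$ when $u\notin\mathbb{F}_q$, and lies in $\mathbb{F}_q^\times$ when $u\in\mathbb{F}_q$. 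Clarifying the role of these square roots of $-\gamma$ and confirming $\phi_\gamma(u)\neq 0$ elsewhere completes the argument; apart from this bookkeeping, the proof is exactly the short Frobenius computation above.
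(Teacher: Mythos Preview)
Your Frobenius computation --- equating $\phi_\gamma(u)^q$ with $\phi_\gamma(u)$ and factoring to $(u^q-u)(u^{q+1}-\gamma)=0$ --- is exactly what the paper does. Where you diverge is in worrying about the nonvanishing of $\phi_\gamma(u)$. The paper simply does not impose this: its proof opens with ``$u\in\mathcal{T}_\gamma$ if and only if $u+\gamma/u\in\mathbb{F}_q$'', i.e.\ it tacitly reads $\mathcal{T}_\gamma=\phi_\gamma^{-1}(\mathbb{F}_q)$ rather than $\phi_\gamma^{-1}(\mathbb{F}_q^\times)$, despite the definition given in the introduction. Under that reading your part~(i) is the entire proof, and it matches the paper line for line.

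Your instinct on part~(ii) is sound, but it points to a genuine inconsistency in the paper's stated definition rather than something you can ``complete by bookkeeping''. If one insists on $\mathcal{T}_\gamma=\phi_\gamma^{-1}(\mathbb{F}_q^\times)$, the equivalence is actually false: take $q$ odd and $-\gamma$ a square in $\mathbb{F}_q^\times$, and let $u\in\mathbb{F}_q^\times$ satisfy $u^2=-\gamma$; then the right-hand condition holds (since $u\in\mathbb{F}_q^\times$) but $\phi_\gamma(u)=u+\gamma/u=u-u=0\notin\mathbb{F}_q^\times$. So the square roots of $-\gamma$ you isolated are not a detail to be checked --- under the strict $\mathbb{F}_q^\times$ reading they are counterexamples. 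The resolution is to take $\mathcal{T}_\gamma=\phi_\gamma^{-1}(\mathbb{F}_q)$, as the paper's own proof does; with that convention your argument and the paper's are the same.
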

By Lemma~\ref{lemma: order of element in T_gamma}, $\mathcal{T}_\alpha =\mathbb{F}^\times_q\cup \mathcal{S}_\alpha$ with $\mathcal{S}_{\alpha}=\{u\in\mathbb{F}_{q^2}\mid u^{q+1} = \alpha\}$. The next lemma establishes the existence of elements in $\mathcal{S}_\alpha$ with a prescribed order.

\begin{lemma}
    \label{lem: q+1 | ord(u)}
    There is $u\in \mathcal{S}_{\alpha}$ such that $(q+1)\mid \ord(u)$. In particular, if $\alpha = 1$, there is $u\in \mathcal{S}_{\alpha}$ such that $\ord(u) = q+1$.
\end{lemma}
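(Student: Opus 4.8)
The plan is to realize $\mathcal{S}_\gamma$ as a coset of the group $\mu_{q+1}$ of $(q+1)$-st roots of unity inside the cyclic group $\mathbb{F}_{q^2}^\times$, and then to locate an element of order divisible by $q+1$ inside that coset by reducing to Lemma~\ref{lem: find k}.

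First I would fix a generator $g$ of $\mathbb{F}_{q^2}^\times$, which is cyclic of order $q^2-1 = (q-1)(q+1)$. Since $(q+1)\mid q^2-1$, the unique subgroup of order $q+1$ is $\mu_{q+1}=\langle g^{q-1}\rangle=\{u\in\mathbb{F}_{q^2}^\times : u^{q+1}=1\}$, and the $(q+1)$-st power map $u\mapsto u^{q+1}$ has image the unique subgroup of order $(q^2-1)/(q+1)=q-1$, namely $\mathbb{F}_q^\times$. Hence for $\gamma\in\mathbb{F}_q^\times$ the set $\mathcal{S}_\gamma=\{u : u^{q+1}=\gamma\}$ is nonempty, of size $q+1$, and equals $u_0\mu_{q+1}$ for any chosen $u_0\in\mathcal{S}_\gamma$. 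Writing $u_0=g^a$, the exponents of the elements of $\mathcal{S}_\gamma$ are exactly the residues modulo $q^2-1$ that are congruent to $a$ modulo $q-1$; let $M$ with $0\le M\le q-2$ satisfy $a\equiv M\pmod{q-1}$.

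If $M=0$, then $a$ is a multiple of $q-1$, so $\gamma=u_0^{q+1}=1$ and $\mathcal{S}_\gamma=\mu_{q+1}$, which is cyclic of order $q+1$; taking $u=g^{q-1}$ gives $\ord(u)=q+1$, which establishes both the general statement and the sharper claim in the case $\gamma=1$. If $M\ge 1$, then $1\le M\le q-1$, so Lemma~\ref{lem: find k} applied with $N=q$ yields a positive integer $k$ with $d:=\gcd\bigl(M+k(q-1),\,q^2-1\bigr)\mid q-1$. Set $u=g^{M+k(q-1)}$; since $M+k(q-1)\equiv a\pmod{q-1}$ we have $u\in\mathcal{S}_\gamma$, and $\ord(u)=(q^2-1)/d$ is a multiple of $(q^2-1)/(q-1)=q+1$, so $(q+1)\mid\ord(u)$.

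The argument is essentially a bookkeeping reduction to Lemma~\ref{lem: find k}, so I do not anticipate a serious obstacle. The points requiring care are: identifying the coset $\mathcal{S}_\gamma$ with a fixed congruence class of exponents modulo $q-1$ (so that the shift $M\mapsto M+k(q-1)$ stays inside $\mathcal{S}_\gamma$), and the elementary observation that $\gcd(m,q^2-1)\mid q-1$ forces $(q+1)\mid(q^2-1)/\gcd(m,q^2-1)$. The only genuine case distinction is $M=0$ versus $M\ge 1$, which is exactly the split between $\gamma=1$ (where the stronger conclusion $\ord(u)=q+1$ is claimed and Lemma~\ref{lem: find k} is not needed) and $\gamma\ne 1$.
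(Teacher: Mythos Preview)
Your proposal is correct and follows essentially the same approach as the paper: both fix a generator of $\mathbb{F}_{q^2}^\times$, identify the elements of $\mathcal{S}_\gamma$ with a residue class of exponents modulo $q-1$, and invoke Lemma~\ref{lem: find k} (with $N=q$) to produce an exponent whose gcd with $q^2-1$ divides $q-1$, forcing $(q+1)\mid \ord(u)$. Your handling of the case $\gamma=1$ (via $M=0$, giving $\mathcal{S}_1=\mu_{q+1}$ directly) is slightly cleaner than the paper's, which derives the sharper conclusion $\ord(u)=q+1$ by combining the general divisibility with $\ord(u)\mid q+1$, but the substance is the same.
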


\begin{proof}
    Let $\xi$ be a multiplicative generator of $\mathbb{F}_{q^2}^{\times}$. Then, $\zeta = \xi^{q+1}$ is a multiplicative generator of $\mathbb{F}_{q}^\times$. Let $m$ be a positive integer less than $q-1$ satisfying $\alpha = \zeta^{m}$.
    By Lemma~\ref{lem: find k}, there exists a positive integer $k$ such that $\gcd(m+k(q-1) ,q^2 -1)\mid (q-1)$. Let $u = \xi^{m+k(q-1)}$. 
    Then,
    \[
    u^{q+1} = \xi^{m(q+1)+k(q^2-1)} = \zeta^{m} = \alpha,
    \]
    which implies $u\in \mathcal{S}_{\alpha}$. Furthermore, its multiplicative order is
    \[
    \ord(u) 
    = \frac{q^2-1}{\gcd(m+k(q-1),q^2-1)},
    \]
    which is a multiple of $q+1$.
    
    In the particular case where $\alpha = 1$, any $u\in \mathcal{S}_{\alpha}$ satisfies $u^{q+1} = 1$, which implies $\ord(u) \mid (q+1)$. Combined with the established result, there exists $u\in \mathcal{S}_{\alpha}$ such that $\ord(u) = q+1$.
\end{proof}

\section{Exact period of Dickson polynomial}

This section is devoted to proving Theorem~\ref{thm: exact period of Dickson polynomials}, which provides the exact period of Dickson polynomials under specific conditions.

We recall the recurrence relation of Dickson polynomials ~\cite{LidlMullenTurnwald1993}.
\begin{equation}\label{recurssive relation}
D_n(x,\alpha) = x D_{n-1}(x,\alpha) - \alpha D_{n-2}(x,\alpha),
\end{equation}
with initial conditions $D_1(x,\alpha) = x$ and $D_2(x,\alpha) = x^2 - 2\alpha$. The definition can be extended for $n=0$ as $D_{0}(x,\alpha) = 2$.

\begin{proposition}
    \label{nonsquare case, =2 =x}
    We have
    \[
    D_{q^2-1}(x,\alpha) \equiv 2 \pmod{x^q-x}
    \quad \textrm{and}\quad 
    D_{q^2}(x,\alpha) \equiv x \pmod{x^q-x}.
    \]
\end{proposition}

\begin{proof}
    Given $\beta \in \mathbb{F}_q$, we can find $u\in \mathbb{F}^\times_{q^2}$ such that $u + \alpha u ^ {-1} = \beta$. Hence,
    \[
    D_{q^2-1}(\beta,\alpha) = D_{q^2-1}(u+\frac{\alpha}{u}, \alpha) = u^{q^2-1}+\frac{\alpha^{q^2-1}}{u^{q^2-1}}=2
    \]
    and 
    \[
    D_{q^2}(\beta, \alpha) = D_{q^2}(u+\frac{\alpha}{u}, \alpha) = u^{q^2}+\frac{\alpha^{q^2}}{u^{q^2}}=\beta.
    \]
    Since these equalities hold for all $\beta \in \mathbb{F}_q$, the congruences follow.
\end{proof}
    
    \begin{lemma}\label{lem: nonsquare}
        The period of the sequence 
        $[D_{n}(x, \alpha)\pmod{x^q-x}]_{n\geq1}$
        divides $q^2-1$.
    \end{lemma}

    \begin{proof}
    Since Dickson polynomials satisfy the recurrence relation (Equation~\eqref{recurssive relation}) with initial conditions $D_0(x,\alpha)=2$ and $D_1(x,\alpha) = x$,
    the result follows from Proposition~\ref{nonsquare case, =2 =x}.
\end{proof}

Now we turn to the special case where $2\nmid q$ and $\alpha\in \mathbb{F}_q^\times$ is a square in $\mathbb{F}_q.$

\begin{proposition}
    \label{square case, =2 =x}
    Suppose $2\nmid q$ and $\alpha \in \mathbb{F}_q^\times$ is a square in $\mathbb{F}_q$. Then, we have
    \[
    D_{\frac{q^2-1}{2}}(x,\alpha) \equiv 2 \pmod{x^q-x}
    \quad \textrm{and}\quad 
    D_{\frac{q^2+1}{2}}(x,\alpha) \equiv x \pmod{x^q-x}.
    \]
\end{proposition}

\begin{proof}
    Given $\beta\in \mathbb{F}_q$, by Lemma~\ref{lem: u^2 + au^-2 = b}, there is $u$ in $\mathbb{F}_{q^2}$ such that $u^2 + \alpha u^{-2} = \beta$. Following the same argument as in Proposition~\ref{nonsquare case, =2 =x}, one can verify that
    $D_{\frac{q^2-1}{2}}(\beta,\alpha) = 2$ and $D_{\frac{q^2+1}{2}}(\beta,\alpha) = \beta$, for any $\beta\in\mathbb{F}_q$, completing the proof.
\end{proof}

\begin{lemma}
\label{lem: square}
    Let $\alpha \in \mathbb{F}_q^\times$ be a square in $\mathbb{F}_q$. Then, the period of the sequence 
    divides $\frac{q^2-1}{2}$.
\end{lemma}

\begin{proof}
    Since Dickson polynomials satisfy the recurrence relation (Equation~\eqref{recurssive relation}) with initial conditions $D_0(x,\alpha)=2$ and $D_1(x,\alpha) = x$,
    the result follows from Proposition~\ref{square case, =2 =x}.
\end{proof}

The following proposition demonstrates that all periods are exact.

\begin{proposition}
    \label{prop: exact}
    Let $n$ be a positive integer such that $D_{n}(x, \alpha) \equiv 2\pmod{x^q-x}$. If $2\mid q$, then $(q^2-1)\mid n$. If $2 \nmid q$, then $\frac{q^2-1}{2} \mid n$; especially, if $\alpha$ is nonsquare in $\mathbb{F}_q$, then $q^2-1\mid n$.
\end{proposition}

\begin{proof}
    Fix $\alpha \in \mathbb{F}_q^\times$ and suppose $n$ is a positive integer such that $D_{n}(x,\alpha) \equiv 2 \pmod{x^q-x}$. Then, $D_{n}(x,\alpha)$ satisfies the following commute diagram.
    \[
    \begin{tikzcd}
        \mathcal{T}_{\alpha} \arrow{r}{x^n} \arrow[swap]{d}{\phi_{\alpha}} & (\mathcal{T}_{\alpha})^n \arrow{d}{\phi_{\alpha^n}} \\%
        \mathbb{F}_q^\times \arrow{r}{D_n(x,\alpha)}& \{2\}
    \end{tikzcd}
    \]
    
    We first claim $(q-1) \mid n$.
    By Lemma~\ref{lem: size-of-preimage}, we have $1\leq |(\mathcal{T}_{\alpha})^n|\leq 2$. 
    When $2\mid q$, since $\mathbb{F}_q\subseteq \mathcal{T}_{\alpha}$, we have $(q-1) \mid n$.
    When $2\nmid q$, the same argument gives $\frac{q-1}{2} \mid n$; if the quotient were odd, then $1$ and $-1$ would both be roots of $x^2-2x+\alpha^n$, forcing $2 = 1 + (-1) =0$ in $\mathbb{F}_q$, which contradicts $2\not\mid q$. Thus, $(q-1) \mid n$ in all cases, and 
    $
    \phi_{\alpha^n}^{-1}(\{2\}) = (\mathcal{T}_{\alpha})^n = \{1\}.
    $

    We next claim $(q+1)\mid n$. By Lemma~\ref{lem: q+1 | ord(u)}, there exists $u\in \mathcal{S}_{\alpha}\subseteq \mathcal{T}_{\alpha}$ such that $(q+1)\mid \mathrm{ord}(u)$. Since $u^n\in (\mathcal{T}_{\alpha})^n$ = \{1\}, we have $u^n = 1$, and so $(q+1)\mid n$.

    Combining the two claims, both $q-1$ and $q+1$ divide $n$. When $2 \mid q$, we have $\gcd(q-1,q+1) = 1$ giving $(q^2-1)\mid n$; when $2 \nmid q$, we have $\gcd(q-1,q+1) = 2$, giving $\frac{q^2-1}{2}\mid n$.
    
    In particular, when $2\nmid q$, and $\alpha$ is nonsquare in $\mathbb{F}_{q}^\times$. Let $\xi$ be a generator of $\mathbb{F}_{q^2}^\times$, and set $\zeta = \xi^{q+1}$, which is a generator of $\mathbb{F}_q^\times$, and write $\alpha = \zeta^{m}$ for some odd integer $m$. Take $u = \xi^m \in \mathcal{S}_\alpha$, which is nonsquare in $\mathbb{F}_{q^2}$. If $n\equiv\frac{q^2-1}{2}\pmod{q^2-1}$, we have
    $
    u^{n} = u^{(q^2-1)/2} =  -1 \neq 1,
    $
    contradicting $(\mathcal{T}_\alpha)^n =\{1\}.$
    Therefore, $(q^2-1) \mid n$.
\end{proof}
 
Now, Theorem~\ref{thm: exact period of Dickson polynomials} derives directly from Proposition~\ref{prop: exact}, Lemma~\ref{lem: nonsquare}, and Lemma~\ref{lem: square}.

\begin{proof}[Proof of Theorem \ref{thm: exact period of Dickson polynomials}]
    If $2\nmid q$ and $\alpha$ is a square in $\mathbb{F}_q$,
    combine the results of Proposition~\ref{prop: exact} and Lemma~\ref{lem: square}, we have
    the exact period of the sequence is a factor and also a multiple of $\frac{q^2-1}{2}$. Thus the exact period of the sequence is $\frac{q^2-1}{2}$.

    Similarly, for other cases, combine the results of Proposition~\ref{prop: exact} and Lemma~\ref{lem: nonsquare}, we have
    the exact period of the sequence is a factor and also a multiple of $q^2-1$, and the result follows.
\end{proof}
Theorem~\ref{thm: exact period of Dickson polynomials} yields the following application to functional identity. 
\begin{corollary}\label{cor: symmetric of dickson}
    Let $q$ be a prime power and $\alpha\in \mathbb{F}_q^\times$. Then for $0\leq i \leq q^2-1$ we have
    \[
    D_{q^2-1-i}(x,\alpha) \equiv \alpha^{-i}D_{i}(x,\alpha)\pmod{x^q-x}.
    \]
    Moreover, if $2\nmid q$ and $\alpha\in \mathbb{F}_q^\times$ is a square in $\mathbb{F}_q$, then for $0\leq i \leq \frac{q^2-1}{2}$ we have
    \[
    D_{\frac{q^2-1}{2}-i}(x,\alpha) \equiv \alpha^{-i}D_{i}(x,\alpha)\pmod{x^q-x}.
    \]
\end{corollary}

\begin{proof}
    Notice that the recurrence relation of Dickson polynomials (Equation~\eqref{recurssive relation}) implies
    \begin{equation}\label{reverse recursive relation}
        D_{n-2}(x,\alpha) = \frac{x}{\alpha}D_{n-1}(x,\alpha)-\frac{1}{\alpha}D_{n}(x,\alpha)\textrm{, for $n\geq 3$}.
    \end{equation}
    
    For $i = 0, 1$, it follows from Theorem~\ref{thm: exact period of Dickson polynomials} that 
    \[
    D_{q^2-1}(x,\alpha)\equiv 2 = 2 \alpha^0 \equiv \alpha^0 D_{0}(x,\alpha)\pmod{x^q-x},
    \]
    and 
    \[
    D_{q^2-2}(x,\alpha)
    = \frac{x}{\alpha}D_{q^2-1}(x,\alpha)-\frac{1}{\alpha}D_{q^2}(x,\alpha) \equiv \frac{1}{\alpha}x \equiv \alpha^{-1}D_{1}(x,\alpha)
    \pmod{x^q-x}.
    \]
    Then the first identity follows from induction on $i$, using the recurrence relation~\eqref{reverse recursive relation}, with the initial cases verified above.
    
    Similarly, the second identity can be proved by the square case of Theorem~\ref{thm: exact period of Dickson polynomials}.
\end{proof}

\section{Dynamics of Dickson polynomials}
For a periodic sequence $[a_n]_{n\geq 1}$, we say that the sequence has exact tail length $\mathscr l$ and exact period $\mathscr k$ if both integers are the smallest integers such that $a_{\mathscr l+\mathscr k}=a_{\mathscr l}$. We call the pair $(\mathscr l,\mathscr k)$ the dynamical structure of the sequence. In this section, we aim to determine the dynamical structure of the sequence $[D_n(x,\alpha)\pmod{x^q-x}]_{n\geq1}$.

We assume $n$ and $\alpha$ satisfy $\alpha^n=\alpha$ in this section. When $n$ is coprime to $q^2-1$, we will explicitly determine the homomorphism $f$, Equation~\eqref{the map}. When $n$ is not coprime to $q^2-1$, we give a partial answer on the dynamical structure.
\subsection{$n$ is coprime to $q^2-1$}
We deal with the case when $n$ is coprime to $q^2-1$. We need some lemmas.
\begin{lemma}\label{lem: D_n(x,1)=kx}
    If $D_n(x,1)=cx\pmod{x^{q}-x}$ for some $c$, then $c=1$.
\end{lemma}
\begin{proof}
    Since $D_n(2,1)=2$, we must have $c=1$ when $q$ is odd. For even $q$, since the coefficients of $D_n(x,1)$ are in $\mathbb{F}_2$, the only possible choice is $c=1$. 
\end{proof}
We recall that $\mathcal{S}_\alpha=\{u\in\mathbb{F}_{q^2}|u^{q+1}=\alpha\}$ for the following proof.
\begin{lemma}\label{lem: kernel}
    Let $n$ be a positive integer such that $D_{n,\alpha}(x)\equiv x \pmod{x^q-x}$. Then, for even $q$, we have:
    \begin{enumerate}
        \item $n^2\equiv 1\pmod{q^2-1}$.
        \item When $\alpha=1$, $n\equiv \pm 1,\ \pm q\pmod{q^2-1}$
        \item When $\alpha\neq 1$, $n \equiv 1,q \pmod{q^2-1}$
    \end{enumerate}
    For odd $q$,
    \begin{enumerate}
        \item $n^2\equiv 1\mod\dfrac{q^2-1}{2}$. In particular, if $\alpha$ is nonsquare, we have $n^2\equiv 1\pmod{q^2-1}$.
        \item When $\alpha=1$, $n\equiv \pm 1,\ \pm q\pmod{\frac{q^2-1}{2}}$
        \item 
        When $\alpha\neq 1$, $n\equiv 1\ \textrm{or}\ q\pmod{\pi_q(\alpha)}$.
        \item We have 
        \[
            \{n|\alpha^n=\alpha\text{ and }D_{n,\alpha}(x)=x\pmod{x^q-x}\}\cong\begin{cases}
            \mathbb{Z}/2\mathbb{Z}, &\text{ if }\alpha\neq 1\\
            \mathbb{Z}/2\mathbb{Z}\oplus\mathbb{Z}/2\mathbb{Z}, &\text{ if }\alpha=1.
            \end{cases}
        \]
    \end{enumerate}
\end{lemma}
\begin{proof}
    \begin{enumerate}
        \item If $D_n(x) \equiv x \pmod{x^q-x}$, then for any $\beta\in \mathbb{F}_q^\times$, $x^n$ maintains or swaps the two solutions of $\phi_\alpha(x)=\beta$. Therefore, $x^{n^2}$ is an identity map on $\mathcal{T}_\alpha$.  
        From Lemma ~\ref{lemma: order of element in T_gamma} and Lemma \ref{lem: q+1 | ord(u)}, 
        there are $u_1,u_2\in \mathcal{T}_\alpha$ such that $q-1 \mid \mathrm{ord}(u_1)$ and $q+1 \mid \mathrm{ord}(u_2)$. We divide the discussion according to the parity of $q$. For even $q$, since $\gcd(q-1,q+1) = 1$, we have 
        \[
        n^2 \equiv 1\  \pmod{q^2-1}.
        \]
        For odd $q$, since $\gcd(q-1,q+1) = 2$, we have
        \[
        n^2 \equiv 1\  \pmod{\frac{q^2-1}{2}}.
        \]
        In particular, if $\alpha$ is nonsquare in $\mathbb{F}_q$, by Lemma ~\ref{lemma: order of element in T_gamma}, there is $u\in \mathcal{T}_\alpha$ such that $u^{q+1} = \alpha$, and 
        \[
        u^{\frac{q^2-1}{2}+1} = \alpha^{\frac{q-1}{2}}u = -u
        \]
        and thus $n^2 \equiv 1\  \pmod{q^2-1}.$
        \item Assuming $\alpha=1$ and letting $\zeta$ be a generator of $\mathbb{F}_q^\times$, we have $\zeta^n=\zeta$ or $\zeta^n=\zeta^{-1}$. This implies $n\equiv \pm 1\pmod{q-1}$. By Lemma~\ref{lemma: order of element in T_gamma}, there exists an element $u$ of order $q+1$. Since $u^n$ is either $u$ or $u^{-1}$, $u^{n-1}=1$ or $u^{n+1}=1$. Thus, $n-1$ or $n+1$ have to congruence to zero mod $q+1$. It follows that we have the following four possible systems of congruence equations
        \begin{align*}
        &
        \begin{cases}
            n\equiv 1 & \pmod{q-1}\\
            n\equiv 1 & \pmod{q+1}
        \end{cases} 
        & 
        \begin{cases}
            n\equiv -1 & \pmod{q-1}\\
            n\equiv 1  & \pmod{q+1}
        \end{cases}\\
        &
        \begin{cases}
            n\equiv 1  & \pmod{q-1}\\
            n\equiv -1 & \pmod{q+1}
        \end{cases} 
        &
        \begin{cases}
            n\equiv -1 & \pmod{q-1}\\
            n\equiv -1 & \pmod{q+1}
        \end{cases}
        \end{align*}
        By the Chinese remainder theorem, we can conclude the following:
        \begin{itemize}
            \item if $q$ is even, $n\equiv \pm 1,\ \pm q \pmod{q^2-1}$.
            \item if $q$ is odd, then $n\equiv \pm 1,\pm q \pmod{\frac{q^2-1}{2}}$.
        \end{itemize}
        \item Suppose $\alpha\neq 1$. First consider $1+\alpha \in \mathbb{F}_q$. Since $\phi_{\alpha}^{-1}(1 + \alpha) = \{1, \alpha\}$, we have
        \[
        1 + \alpha^n = \phi_{\alpha^n}(1^n) = D_{n}(1+\alpha,\alpha) = 1 + \alpha.
        \]
        Thus, $\alpha^{n} = \alpha$, and $\phi_{\alpha^n}(x) = \phi_{\alpha}(x) $. We have the following commute diagram.
        \[
            \begin{tikzcd}
                \mathcal{T}_{\alpha} \arrow{r}{x^n} \arrow[swap]{d}{\phi_{\alpha}} & \mathcal{T}_{\alpha} \arrow{d}{\phi_{\alpha}} \\%
                \mathbb{F}_q \arrow{r}{D_{n,\alpha}(x)}& \mathbb{F}_q
            \end{tikzcd}
        \]
        
        Next, let $\zeta$ be a generator of  $\mathbb{F}_q^\times$ and consider $\zeta + \alpha\zeta^{-1},\zeta^{-1}+\alpha\zeta \in \mathbb{F}_q$. We have
        \[
        \phi_{\alpha}^{-1}(\zeta+\alpha\zeta^{-1}) = \{\zeta,\alpha\zeta^{-1}\}
        \quad \textrm{and} \quad
        \phi_{\alpha}^{-1}(\zeta^{-1}+\alpha\zeta)=\{\zeta^{-1},\alpha\zeta\}.
        \]
        If $\zeta^{n} \neq \zeta$, then $(\zeta^{-1})^n\neq \zeta^{-1}$. It follows that
        \[
        \zeta^{n} = \alpha \zeta^{-1} \quad \textrm{and}\quad \zeta^{-n} = \alpha\zeta,
        \]
        and
        \[
        \zeta^{n+1} = \alpha = \zeta^{-(n+1)}.
        \]
        Hence $\alpha = \alpha^{-1}$. 
        If $q$ is even, then $\alpha = 1$, a contradiction. Suppose $q$ is odd,
        since $\alpha\neq 1$, we have $\alpha = -1$, and $n \equiv \frac{q-3}{2}\pmod{q-1}$.
        If $q\equiv 3 \pmod{4}$, then $n$ is even, and so $(-1)^n = 1 \neq -1$, a contradiction.
        If $q\equiv 1\pmod{4}$, let $z$ b   e a square element in $\mathbb{F}_q$, satisfying $z - z^{-1} \neq 0$. Since $\phi_{-1}^{-1}(z - z^{-1}) = \{z,-z^{-1}\}$, but $z^{n} = z^{\frac{q-1}{2}} z^{-1} =z^{-1}$ which is not $z$ or $-z^{-1}$, also a contradiction.   
        Therefore, $\zeta^n = \zeta$. It follows that $\zeta^{n-1} = 1$ and $n\equiv 1\pmod{q-1}$.
        
        Now, let $u\in \mathcal{S}_{\alpha}$ satisfying $q+1\mid \mathrm{ord}(u)$ and consider $u+\alpha u^{-1}\in \mathbb{F}_q.$ Since $
        \phi^{-1}(u+\alpha u^{-1})= \{u,\alpha u^{-1}\}$,
        we have either $u^{n} = u$ or $u^{n} = \alpha u^{-1}$.\\
        \textbf{Case 1.} If $u^n = u$, then $u^{n-1} = 1$ and $n\equiv 1\pmod{q+1}$.\\
        \textbf{Case 2.} If $u^n = \alpha u^{-1}$, then $u^{n+1} = \alpha = u^{q+1}$. It follows that $n+1\equiv q+1\pmod{k(q+1)}$, for some integer $k$. 
        Therefore, $n\equiv -1\pmod{q+1}$.
        
        Finally, combining all the possible cases, we have the following two possible systems of congruence equations
        \begin{align*}
            \begin{cases}
                n\equiv 1 &\pmod{q-1}\\
                n\equiv 1 &\pmod{q+1}
            \end{cases} 
            \quad \mathrm{and} \quad
            \begin{cases}
                n\equiv 1 &\pmod{q-1}\\
                n\equiv -1&\pmod{q+1}
            \end{cases}
            .
        \end{align*}
        By the Chinese remainder theorem, we have 
        \[
        n\equiv 
        \begin{cases}
            1, q \pmod {\frac{q^2-1}{2}} & \text{if } q \text{ is odd,}\\
            1, q \pmod {q^2-1}           & \text{if } q \text{ is even.}
        \end{cases}
        \]

        Suppose $\alpha$ is nonsquare in $\mathbb{F}_q$. We claim $n\equiv 1\text{ or }q\pmod{q^2-1}$, and by the above conclusion we can consider $n\equiv \frac{q^2-1}{2} + 1\ \textrm{or}\  \frac{q^2-1}{2} + q\ \pmod{q^2-1}$. Take $u\in S_{\alpha}$ such that $u + \alpha u^{-1} \neq 0$, then $u^n = \alpha^{(q-1)/2} u\textrm { or } \alpha^{(q-1)/2}u^q$ and will equal to $ -u\ \textrm{or}\ -\alpha u^{-1}$ respectively, which is not $u$ or $\alpha u ^{-1}$, a contradiction. Therefore
        \[
        n\equiv 1\ \textrm{or}\ q\ \pmod{q^2-1}.
        \]
        \item Immediately follow from (1)-(3).
    \end{enumerate}
\end{proof}

Now, Theorem~\ref{thm: group structure} follows directly.
\begin{proof}[Proof of Theorem~\ref{thm: group structure}]
For $n$ satisfying $\alpha^n=\alpha$, we must have $n-1\equiv 0\pmod r$, where $r$ denoted the order of $\alpha$ in $\mathbb{F}_q^\times$, so it is clear that $n$ is in $U_\alpha$. Then, this theorem is obviously follows from the fact that the map $n\mapsto D_{n,\alpha}(x)\mod(x^q-x)$ is a group homomorphism with the kernel given in Lemma~\ref{lem: kernel}.  
\end{proof}
\begin{corollary}
\begin{enumerate}
    \item Let $\alpha\in\mathbb{F}_q^\times$ satisfy $\alpha^n=\alpha$. The period of $D_{n,\alpha}(x)\pmod{x^q-x}$ is $\delta_{n,\alpha}k$, where $k$ is the multiplicative order of $n$ modulo $\pi_q(\alpha)$ with
    \[
    \delta_{n,\alpha}=
    \begin{cases}
        \frac{1}{2}, &\text{ if } n \text{ and } \alpha \text{ satisfy the conditions we describe below,}\\
        1, &\text{ otherwise. }
    \end{cases}
    \]
    The condition is that 
    when $\alpha = 1$, $-1,\pm q \not \in \{n^i \pmod{q^2-1}| i=1,2,3\dots\}$, and that when $\alpha \neq 1$, $q \not \in \{n^i \pmod{\pi_q(\alpha)} | i=1,2,3\dots\}$.
    \item For $\alpha\in(\mathbb{F}_q)^\times$ with the multiplicative order $m$, let $m=2^{k'}p_{1}^{e'_{1}}\cdots p_{l}^{e'_{l}}$. Let $\pi_q(\alpha)=2^kp_1^{e_1}\cdots p_r^{e_r}q_1^{f_1}\cdots  q_s^{f_s}$ where $p_i$ and $q_i$ are odd primes that divide $q-1$ and $q+1$ respectively. Then there exists $n$ with $\alpha^n=\alpha$ such that $D_{n,\alpha}(x)$ has period 
    $$\lcm\coloneqq
    \begin{cases}
        \lcm(\{2^{k-k'},p_1^{e_1-e'_1},\ldots,p_l^{e_l-e'_l},\phi(p_{l+1}^{e_{l+1}}),\ldots \phi(p_n^{e_n}),\phi(q_1^{f_1}),\ldots \phi(q_s^{f_s})\}) & \text{if }k' > 1\\
        \lcm(\{2^{k-2},p_1^{e_1-e'_1},\ldots,p_l^{e_l-e'_l},\phi(p_{l+1}^{e_{l+1}}),\ldots \phi(p_n^{e_n}),\phi(q_1^{f_1}),\ldots \phi(q_s^{f_s})\}) & \text{if }k'=1\\
        \lcm(\{p_1^{e_1-e'_1},\ldots,p_l^{e_l-e'_l},\phi(p_{l+1}^{e_{l+1}}),\ldots \phi(p_n^{e_n}),\phi(q_1^{f_1}),\ldots \phi(q_s^{f_s})\}) & \text{if }k'=0
    \end{cases}
    $$
    where $\phi$ is the Euler's totient function.
    \item Following the above notations, we have $\alpha$ such that the period of $D_{n,\alpha}(x)\pmod{x^q-x}$ equals the maximum of the multiplicative order of $n$ in $\mathbb{Z}/(q^2-1)\mathbb{Z}$ if some $\phi(p_i^{e_i})$ or $\phi(q_i^{f_i})$ is divisible by $2^k$.
\end{enumerate}
\end{corollary}
\begin{proof}
    \begin{enumerate}
        \item This naturally follows from Lemma~\ref{lem: kernel}.
        \item For each prime $p_i$ and $q_i$ that is not a divisor of $m$, we can find generators $\alpha_i$ and $\beta_i$ of the multiplicative groups $(\mathbb{Z}/p_i^{e_i}\mathbb{Z})^{\times}$ and $(\mathbb{Z}/q_i^{f_i}\mathbb{Z})^{\times}$ respectively. Let us consider an integer $n$ satisfying the following system of congruence equations
        \[
        \begin{cases}
        n\equiv 1+2^{k'}    &\pmod{2^k}\\
        n\equiv 1+p_i^{e'_i}&\pmod{p_i^{e_i}}\quad\text{ for }i\leq l,\\ 
        n\equiv \alpha_i&\pmod{p_i^{e_i}}\quad\text{ for }i>l,\\
        n\equiv \beta_i&\pmod{q_i^{f_i}}.
        \end{cases}
        \]
        Note that the multiplicative order of $n$ in $(\mathbb{Z}/\pi_q(\alpha)\mathbb{Z})^{\times}$ is $\lcm$. Let's replace $\beta_1$ by $\beta_1^2$. Since $2$ divides $\phi(q_i^{f_i})$ for every $q_i$, the solution $n$, after the replacement, has the same multiplicative order. 
        Moreover, since we replace $\beta_1$ by its square, it follows that $-1$ and $q$ is not in the multiplicative subgroup generated by $n$.
        In particular, to exclude the possibility that $-q$ can be generated by $n \pmod{q^2-1}$ when $\alpha =1$, we can further replace $\alpha_1$ by $\alpha_1^2$.

        \item Let $\alpha$ be an element in $\mathbb{F}_q^\times$ of order $2^k$. Clearly, $\alpha$ is nonsquare. If $2^k$ divides either $\phi(p_i^{e_i})$ or $\phi(q_i^{f_i})$, the $\lcm$ are the same with or without the term $2^k$ and is the maximum multiplicative order of elements in the group $\mathbb{Z}/(q^2-1)\mathbb{Z}$.
    \end{enumerate}
\end{proof}
\subsection{$n$ is not coprime to $q^2-1$}
Now, let's discuss the case when $n$ is not coprime to $q^2-1$. Let $(l,k)$ be the dynamical structure of $[n^k\pmod{\pi_q(\alpha)}]_k$, so
\begin{equation}\label{eq: preperiodic}
    D^{l}_n(x)=D_{n^{l}}(x)=D_{n^{l+k}}(x)=D_n^{l+k}(x).
\end{equation}
Equation~(\ref{eq: preperiodic}) implies $\mathscr{l}=l-\mathscr{m_l}\mathscr{k}$ and $k=\mathscr{m_k}\mathscr{k}$ for some integers $\mathscr{m_l}$ and $\mathscr{m_k}$. The integer $\mathscr{k}$ is also the smallest positive integer for which $D_n^\mathscr{k}(x)$ acts as the identity map on the periodic points of $D_n(x)$ in $\mathbb{F}_q$. The tail part of $x^n$ is well-behavior under the map $\phi_\alpha$ thus allows for simple analysis.
\begin{lemma}
    Using the notation defined above, we have $l=\mathscr{l}$.
\end{lemma}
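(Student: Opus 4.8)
The plan is to realise $\mathscr{l}$ and $l$ as the worst-case tail lengths of the point-orbits of $D_{n,\alpha}$ and of multiplication-by-$n$ on $\mathbb{Z}/\pi(\alpha)\mathbb{Z}$, and to match these through the two-to-one cover $\phi_\alpha$. The inequality $\mathscr{l}\le l$ is already available: by~\eqref{eq: iteration relation} the class $D_{n,\alpha}^m(x)\bmod(x^q-x)=D_{n^m}(x)\bmod(x^q-x)$ depends only on $n^m\bmod\pi(\alpha)$, so the tail of $[D_{n,\alpha}^m(x)]_m$ is no longer than that of $[n^m\bmod\pi(\alpha)]_m$ (this is $\mathscr{l}=l-\mathscr{m_l}\mathscr{k}$). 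For the converse I would first dispose of the case $\gcd(n,q^2-1)=1$, in which $[n^m\bmod\pi(\alpha)]_m$ is purely periodic and $\mathscr{l}=l=0$ (Theorem~\ref{thm: group structure}). So assume $\gcd(n,\pi(\alpha))>1$, put $P=\max_{p\mid\gcd(n,\pi(\alpha))}\lceil v_p(\pi(\alpha))/v_p(n)\rceil$ (where $v_p$ is the $p$-adic valuation; each such $p$ divides $n$), and observe $l+1=P$ while $\mathscr{l}+1=\max_{\beta\in\mathbb{F}_q}(\text{pre-period of }\beta\text{ under }D_{n,\alpha})$. Thus it suffices to exhibit one $\beta\in\mathbb{F}_q$ whose $D_{n,\alpha}$-orbit has pre-period $\ge P$.

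I would first record the auxiliary identity $\pi(\alpha)=\lcm\{\ord(u)\mid u\in\mathcal{T}_\alpha\}$. For $\ord(u)\mid\pi(\alpha)$ one uses $\mathcal{T}_\alpha=\mathbb{F}_q^\times\cup\mathcal{S}_\alpha$ (Lemma~\ref{lemma: order of element in T_gamma}): elements of $\mathbb{F}_q^\times$ have order dividing $q-1$, and when $\alpha$ is a square an element $u\in\mathcal{S}_\alpha$ satisfies $u^{(q^2-1)/2}=(u^{q+1})^{(q-1)/2}=\alpha^{(q-1)/2}=1$. For the reverse inclusion, $\mathbb{F}_q^\times$ supplies the $(q-1)$-part and Lemma~\ref{lem: q+1 | ord(u)} supplies $u_0\in\mathcal{S}_\alpha$ with $(q+1)\mid\ord(u_0)$; the only delicate point is the $2$-part when $q$ is odd, which is forced by the (non)squareness of $\alpha$ (a nonsquare has its order divisible by the full $2$-part of $q-1$, and then so does $\ord(u_0)$ via $u_0^{q+1}=\alpha$). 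Granting this, let $p^{*}$ attain the maximum defining $P$; then $v_{p^{*}}(\pi(\alpha))=\max_{u\in\mathcal{T}_\alpha}v_{p^{*}}(\ord(u))$, so there is $u^{*}\in\mathcal{T}_\alpha$ with $v_{p^{*}}(\ord(u^{*}))=v_{p^{*}}(\pi(\alpha))$, and one may take $u^{*}\in\mathbb{F}_q^\times$ of $p^{*}$-power order if $p^{*}\mid q-1$, and $u^{*}\in\mathcal{S}_\alpha$ as in Lemma~\ref{lem: q+1 | ord(u)} if $p^{*}\mid q+1$.

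Now set $\beta^{*}=\phi_\alpha(u^{*})$. From the diagram~\eqref{commutative diagram}, $D_{n,\alpha}^m(\beta^{*})=\phi_\alpha\bigl((u^{*})^{n^m}\bigr)$, and $\phi_\alpha(v)=\phi_\alpha(w)$ exactly when $v=w$ or $vw=\alpha$. Hence the pre-period of $\beta^{*}$ is $\min(p_1,p_2)$, where $p_1$ is the pre-period of $[(u^{*})^{n^m}]_m$ under $x\mapsto x^n$ and $p_2$ is the least $m$ for which $(u^{*})^{n^{m'}+n^m}=\alpha$ has a solution $m'>m$. By the choice of $u^{*}$ and $p^{*}$, $p_1\ge\lceil v_{p^{*}}(\ord(u^{*}))/v_{p^{*}}(n)\rceil=P$. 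For $p_2$, I would show no collision occurs with $m<P$: then $m\le P-1<v_{p^{*}}(\ord(u^{*}))/v_{p^{*}}(n)$, so $v_{p^{*}}(n^{m})<v_{p^{*}}(\ord(u^{*}))$, and since $m'>m$ also $v_{p^{*}}(n^{m'}+n^m)=v_{p^{*}}(n^m)$. If $\alpha=1$ the equation reads $\ord(u^{*})\mid n^{m'}+n^m$, impossible; if $\alpha\ne1$ and $u^{*}\in\mathbb{F}_q^\times$ has $p^{*}$-power order, then $\alpha^n=\alpha$ with $p^{*}\mid n$ forces $p^{*}\nmid\ord(\alpha)$, hence $\alpha\notin\langle u^{*}\rangle$ and the equation is unsolvable; if $\alpha\ne1$ and $u^{*}\in\mathcal{S}_\alpha$, the equation becomes $\ord(u^{*})\mid n^{m'}+n^m-(q+1)$, and since $v_{p^{*}}(q+1)=v_{p^{*}}(\ord(u^{*}))>v_{p^{*}}(n^{m'}+n^m)$ the right side has $p^{*}$-valuation $v_{p^{*}}(n^m)<v_{p^{*}}(\ord(u^{*}))$, again impossible. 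Thus $p_2\ge P$, the pre-period of $\beta^{*}$ is $\ge P$, and $\mathscr{l}\ge P-1=l$.

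The main obstacle is this last ``no collision'' step: ruling out that the fibre identification of $\phi_\alpha$ folds the tail of the chosen orbit onto its cycle. This forces a coordinated case split over the three regimes for $\alpha$ (nonsquare, square $\ne1$, $\alpha=1$) and over whether $u^{*}$ lives in $\mathbb{F}_q^\times$ or $\mathcal{S}_\alpha$, relying throughout on the constraint $\ord(\alpha)\mid n-1$ coming from $\alpha^n=\alpha$; the subsidiary identity $\pi(\alpha)=\lcm\{\ord(u)\mid u\in\mathcal{T}_\alpha\}$, especially its $2$-part when $q$ is odd, is routine but needs the same case-by-case care.
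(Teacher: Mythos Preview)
Your approach is sound (modulo the $p^{*}=2$ bookkeeping you flag), but it takes a genuinely different and more laborious route than the paper's. Where you fix a single witness $u^{*}\in\mathcal{T}_\alpha$ and rule out fibre collisions $(u^{*})^{n^{m'}+n^m}=\alpha$ by a prime-valuation case split (over $\alpha=1$; $u^{*}\in\mathbb{F}_q^{\times}$ of $p^{*}$-power order; $u^{*}\in\mathcal{S}_\alpha$), the paper disposes of the collision issue for \emph{every} $\gamma\in\mathcal{T}_\alpha$ at once with a two-line involution trick: if $\phi_\alpha(\gamma)$ is $D_{n,\alpha}$-periodic with period dividing $\mathscr{k}$ but $\gamma^{n^{\mathscr{k}}}\neq\gamma$, then $\gamma^{n^{\mathscr{k}}}=\alpha/\gamma$ (the other point of the fibre), and one more application of $x\mapsto x^{n^{\mathscr{k}}}$ gives
\[
\gamma^{n^{2\mathscr{k}}}=(\alpha/\gamma)^{n^{\mathscr{k}}}=\alpha^{n^{\mathscr{k}}}\big/\gamma^{n^{\mathscr{k}}}=\alpha\big/(\alpha/\gamma)=\gamma
\]
(using $\alpha^{n}=\alpha$), so $\gamma$ was $x^n$-periodic after all. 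This single observation replaces your entire ``no collision'' case analysis and needs no separate treatment of the $2$-part. Conversely, your write-up is more explicit about the remaining step both arguments require: the identification $\pi(\alpha)=\lcm\{\ord(u):u\in\mathcal{T}_\alpha\}$, which is what lets one pass from ``pre-periods on $\mathcal{T}_\alpha$ match pre-periods on $\mathbb{F}_q$'' to ``$l=\mathscr{l}$''. The paper's proof leaves this connection implicit (its ingredients are already in Lemma~\ref{lem: q+1 | ord(u)} and the proof of Proposition~\ref{prop: exact}), so in that respect your version is more self-contained.
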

\begin{proof}
    Let $\gamma\in \mathbb{F}_q^\times$, and suppose $\gamma$ is not a periodic point under $x^n$, but $\phi_\alpha(\gamma)$ is a periodic point under $D_n$. Let $\mathscr{k}$ be the period of $[D_{n^m}(x,\alpha)\pmod{x^q-x}]_{m}$. Then, both $\gamma$ and $\gamma^{n^\mathscr{k}}$ map to the same point under $\phi_\alpha$. That is $\phi_{\alpha}^{-1}(\gamma+\frac{\alpha}{\gamma})=\{\gamma,\gamma^{n^\mathrm{k}}\}$. Moreover, we know that $\phi_\alpha^{-1}(\gamma+\frac{\alpha}{\gamma})=\{\gamma, \frac{\alpha}{\gamma}\}$, so we have $\gamma^{n^\mathscr{k}}=\frac{\alpha}{\gamma}$. It implies $(\gamma^{n^\mathscr{k}})^{n^\mathscr{k}}=(\frac{\alpha}{\gamma})^{n^\mathscr{k}}=\frac{\alpha^{n^\mathscr{k}}}{\gamma^{n^{\mathscr{k}}}}=\frac{\alpha}{\alpha/\gamma}=\gamma$. This shows that $\gamma$ is a periodic point under $x^n$, contradicts with the assumption.
\end{proof}

Although we can determine the tail length, calculating the exact period remains challenging. We present the following proposition and raise a question.

\begin{proposition}
    If $k$ is odd, then $(\mathscr{l},\mathscr{k})=(l,k)$.
\end{proposition}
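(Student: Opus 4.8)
The plan is to reduce everything to the single assertion $\mathscr{k}=k$. Indeed $\mathscr{l}=l$ is exactly the preceding lemma, and $\mathscr{k}\mid k$ is part of \eqref{eq: preperiodic}, so it remains to prove $k\mid\mathscr{k}$, i.e. $n^{l+1+\mathscr{k}}\equiv n^{l+1}\pmod{\pi(\alpha)}$. Set $a=n^{l+1}$ and $b=n^{l+1+\mathscr{k}}=a\,n^{\mathscr{k}}$; since $\alpha^{n}=\alpha$, iterating gives $\alpha^{a}=\alpha^{b}=\alpha$. By \eqref{eq: iteration relation} and $\mathscr{l}=l$, the periodicity hypothesis becomes $D_{a}(x,\alpha)\equiv D_{b}(x,\alpha)\pmod{x^{q}-x}$. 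Translating this through the commutative diagram \eqref{commutative diagram}: for $\beta\in\mathbb{F}_{q}$, choosing $u\in\mathcal{T}_{\alpha}$ with $\phi_{\alpha}(u)=\beta$ gives $D_{a}(\beta,\alpha)=u^{a}+\alpha u^{-a}$ and $D_{b}(\beta,\alpha)=u^{b}+\alpha u^{-b}$, hence
\[
D_{a}(\beta,\alpha)-D_{b}(\beta,\alpha)=u^{-(a+b)}\,(u^{a}-u^{b})\,(u^{a+b}-\alpha).
\]
Thus $D_{a}\equiv D_{b}\pmod{x^{q}-x}$ is equivalent to: for every $u\in\mathcal{T}_{\alpha}=\mathbb{F}_{q}^{\times}\cup\mathcal{S}_{\alpha}$, either $u^{a}=u^{b}$ or $u^{a+b}=\alpha$.

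The main step is to deduce from this that $a\,n^{2\mathscr{k}}\equiv a\pmod{\pi(\alpha)}$. I would run the dichotomy first on $\mathbb{F}_{q}^{\times}$: if $(q-1)\nmid a-b$, then the set of $u$ with $u^{a}\neq u^{b}$ is the complement of a proper subgroup of the cyclic group $\mathbb{F}_{q}^{\times}$, and it must sit inside the coset $\{u:u^{a+b}=\alpha\}$; since a coset of a subgroup can contain the complement of a subgroup only when the latter has index $1$ or $2$, a counting argument forces $\alpha\in\{1,-1\}$ together with either $(q-1)\mid a+b$ or $\gcd(a-b,q-1)=\gcd(a+b,q-1)=\tfrac{q-1}{2}$. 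The value $\alpha=-1$ occurring as a nonsquare (so $q\equiv3\pmod4$, $\tfrac{q-1}{2}$ odd) is then excluded for our specific pair $a=n^{l+1}$, $b=a\,n^{\mathscr{k}}$ by a parity count, since $a(n^{\mathscr{k}}\pm1)$ is even while the residue it is forced to hit modulo $q-1$ is odd. In every surviving case $(q-1)\mid a(n^{2\mathscr{k}}-1)$, using $a(n^{2\mathscr{k}}-1)=a(n^{\mathscr{k}}-1)(n^{\mathscr{k}}+1)$. The same argument on $\mathcal{S}_{\alpha}$, using Lemma~\ref{lem: q+1 | ord(u)} to produce $u_{0}\in\mathcal{S}_{\alpha}$ with $(q+1)\mid\ord(u_{0})$ and the description of $\mathcal{S}_{\alpha}$ as a coset of $\{u:u^{q+1}=\alpha\}$, yields $(q+1)\mid a(n^{2\mathscr{k}}-1)$. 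Therefore $a\,n^{2\mathscr{k}}\equiv a\pmod{\lcm(q-1,q+1)}$, and when $\pi(\alpha)=\lcm(q-1,q+1)$ --- that is, $q$ even, or $q$ odd with $\alpha$ a square --- we are done.

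It remains to recover the last factor of $2$ when $q$ is odd and $\alpha$ is a nonsquare, where $\pi(\alpha)=q^{2}-1=2\lcm(q-1,q+1)$. Here I would argue $2$-adically: $\ord(\alpha)\mid n-1$ and nonsquareness of $\alpha$ force $v_{2}(n-1)\geq v_{2}(q-1)$; the $\mathcal{S}_{\alpha}$-analysis pins $v_{2}(n^{\mathscr{k}}+1)=v_{2}(q+1)$; and because $\mathscr{k}$ is odd, $v_{2}(n^{\mathscr{k}}-1)=v_{2}(n-1)$, so $v_{2}\bigl(a(n^{2\mathscr{k}}-1)\bigr)=v_{2}(n-1)+v_{2}(n^{\mathscr{k}}+1)\geq v_{2}(q^{2}-1)$; combined with $\tfrac{q^{2}-1}{2}\mid a(n^{2\mathscr{k}}-1)$ this gives $(q^{2}-1)\mid a(n^{2\mathscr{k}}-1)$. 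In every case, then, $n^{l+1+2\mathscr{k}}=a\,n^{2\mathscr{k}}\equiv a=n^{l+1}\pmod{\pi(\alpha)}$, so $k\mid2\mathscr{k}$; together with $\mathscr{k}\mid k$ this leaves $k\in\{\mathscr{k},2\mathscr{k}\}$, and $k=2\mathscr{k}$ is impossible because $k$ is odd. Hence $k=\mathscr{k}$, and with $\mathscr{l}=l$ we conclude $(\mathscr{l},\mathscr{k})=(l,k)$.

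The reduction and the functional-equation translation are routine; I expect the real work to be the coset-and-parity classification of the pairs $(u^{a},u^{b})$ on $\mathbb{F}_{q}^{\times}$ and on $\mathcal{S}_{\alpha}$, and in particular the nonsquare-$\alpha$ case, where the extra factor of $2$ in $\pi(\alpha)=q^{2}-1$ has to be chased through a $2$-adic valuation estimate (one must also check compatibility of the various $2$-adic constraints when $n$ is even). It is precisely in these parity and valuation arguments that the hypothesis that $k$ is odd (equivalently that $\mathscr{k}$ is odd) is used; without it the $180^{\circ}$-type coincidences between Dickson polynomials --- the phenomenon illustrated by the $\mathbb{F}_{5}$ example in the text --- can genuinely halve the period.
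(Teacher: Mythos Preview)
Your reduction to $k\mid 2\mathscr{k}$ and the use of oddness at the end are exactly the paper's strategy. Where you diverge is in how you establish $k\mid 2\mathscr{k}$: the paper does it via a short fiber argument, while you attempt a structural classification on $\mathbb{F}_q^\times$ and $\mathcal{S}_\alpha$ separately, followed by a $2$-adic patch for the nonsquare case.

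The paper's idea can be made fully precise by a two-line computation that you are one step away from. With $a=n^{l+1}$, $b=an^{\mathscr{k}}$, $c=an^{2\mathscr{k}}$, the condition $D_a\equiv D_b\pmod{x^q-x}$ gives, for each $u\in\mathcal{T}_\alpha$, either $u^{a-b}=1$ or $u^{a+b}=\alpha$ --- exactly your factorization. Now simply observe that
\[
a-c \;=\; (a-b)\bigl(1+n^{\mathscr{k}}\bigr) \;=\; (a+b)\bigl(1-n^{\mathscr{k}}\bigr).
\]
In the first alternative $u^{a-c}=(u^{a-b})^{1+n^{\mathscr{k}}}=1$; in the second, $u^{a-c}=(u^{a+b})^{1-n^{\mathscr{k}}}=\alpha^{1-n^{\mathscr{k}}}=1$, since $\alpha^n=\alpha$ forces $\alpha^{n^{\mathscr{k}}}=\alpha$. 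Thus $u^{a-c}=1$ for \emph{every} $u\in\mathcal{T}_\alpha$, and the Section~4 analysis (the argument behind Proposition~\ref{prop: exact}) shows that the least common multiple of orders over $\mathcal{T}_\alpha$ is a multiple of $\pi(\alpha)$ in every case. Hence $\pi(\alpha)\mid a-c$, i.e.\ $k\mid 2\mathscr{k}$, and you are done.

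Your coset-counting on $\mathbb{F}_q^\times$ can be repaired (the index bound actually constrains the subgroup $K=\{u:u^{a+b}=1\}$, not $H$; from there the forced value $\alpha=-1$ together with $\alpha^n=\alpha\Rightarrow n$ odd does eliminate the bad sub-case). But transplanting the argument to $\mathcal{S}_\alpha$ is not ``the same'': $\mathcal{S}_\alpha$ is a coset, not a subgroup, so both conditions become coset conditions and the complement-of-a-subgroup picture disappears. The more serious gap is the $2$-adic step for nonsquare $\alpha$: the assertion $v_2(n^{\mathscr{k}}+1)=v_2(q+1)$ is not extractable from the $\mathcal{S}_\alpha$ dichotomy in the way you suggest --- in the alternative $u_0^{a+b}=\alpha$ you only get $\mathrm{ord}(u_0)\mid a+b-(q+1)$, which constrains $v_2(a+b)$ against $v_2(q+1)$ in a way that does not close to your claimed equality. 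The direct computation above sidesteps all of this, since it yields $u^a=u^c$ on all of $\mathcal{T}_\alpha$ at once and lets you invoke the full $\pi(\alpha)$ without splitting into square and nonsquare cases.
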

\begin{proof}
    If $D_{n,\alpha}^m(x)=x\pmod{x^q-x}$ and $x^{n^m}\neq x$ on $\mathbb{F}_q$, then $x^{n^m}$ must be either a transposition or the identity of $\phi_\alpha^{-1}(\gamma)$ for $\gamma$, which is a periodic point under $x^n$. Consequently, $x^{n^{2m}}=x$ on $\mathbb{F}_q$, which implies that $k$ must divide $2m$. Note that $k$ cannot divide $m$; otherwise, we would have $x^{n^m}=x$. Therefore, $k$ must be even.
\end{proof}
When $k$ is even, we sometimes observe $\mathscr{k}=k$ and sometimes $\mathscr{k}=\frac{k}{2}$. At present, we do not have a complete criterion distinguishing these two cases. We therefore leave the following question open.
\begin{question}
When $k$ is even, under what conditions do we have $\mathscr{k}=k$ versus $\mathscr{k}=\frac{k}{2}$?
\end{question}
\subsection{Binomial identity from exact periodicity}
The exact period we established above yields various identities, e.g., Corollary~\ref{cor: symmetric of dickson}. We provide other identities that would be complicated to prove using only combinatorial facts. For the remainder of this section, let $s=q-1$ and $t=\lfloor \frac{q}{2}\rfloor$. We introduce the following notation. For non-negative integers $n,j,k$, define
\[
\beta_{n,j,k} = \frac{n}{n-js-k}\binom{n-js-k}{js+k},\quad
\gamma_{n,j,k} = \frac{n}{n-jt-k}\binom{n-jt-k}{jt+k},\quad
\delta_{n,j,k}= (-1)^j\gamma_{n,j,k}.
\]
We can rewrite the Dickson polynomial in Definition~\ref{def: Dickson polynomial} when $n=q^2-1$ as
\[
D_{q^2-1}(x,\alpha) \equiv 
    \displaystyle 2 + \sum_{k=0}^{s-1}\left(\sum_{j=0}^{ t}\beta_{q^2-1,j,k}\right)(-\alpha)^{k} x^{2(q-k-1)}\pmod{x^q-x}
\]
When $2\not\mid q$ and $n=\frac{q^2-1}{2}$, we have
\[
D_{\frac{q^2-1}{2}}(x,\alpha) \equiv 2(-\alpha)^{\frac{q^2-1}{4}} + \sum_{k=0}^{
t-1} \left(\sum_{j=0}^{t}\gamma_{\frac{q^2-1}{2},j,k}(-\alpha)^{jt}\right)(-\alpha)^kx^{q-1-2k}
\pmod{x^q-x}\]
The coefficients in the double sums can be arranged as the tables in Section~5. These coefficients have the following property.
\begin{corollary}
    \label{cor: sum of beta}
    Let $q$ be a power of $2$. 
    Then, for each fixed $k$ with $0\leq k \leq s-1$, we have
    \[
    \sum_{j=0}^{t}\beta_{q^2-1,j,k}\equiv 0\pmod{2}.
    \]
\end{corollary}
\begin{proof}
    The argument for $\beta_{q^2-1,j,k}$ proceeds as before, by fixing $k$, the sum $(\sum_{j}\beta_{q^2-1,j,k})(-\alpha)^k$ equals the coefficient of the term equivalent to $x^{2(q-k-1)}$ in $D_{q^2-1}(x,\alpha) \pmod{x^2-x}$ for $\alpha\in \mathbb{F}_{q}^{\times}$. Since Theorem~\ref{thm: exact period of Dickson polynomials} gives $D_{q^2-1}(x,\alpha)\equiv 0 \pmod{x^q-x}$, all the corresponding coefficients vanished, yielding the result.
\end{proof}
    We note that an analogous identity holds for odd prime powers $q$, where the proof proceeds via a Catalan number identity. We omit the details here as they fall outside the scope of this paper.
\begin{corollary}
    \label{cor: sum of gamma and delta}
    Let $q$ be a power of odd prime $p$. Then for each fixed $k$ with $0\leq k\leq t-1$, we have
    \begin{align*}
        \sum_{j=0}^{t} \gamma_{\frac{q^2-1}{2},j,k} &\equiv 0 \pmod{p}\textrm{,\quad for $q\equiv 1\pmod{4}$},
        \\
        \sum_{j=0}^{t} \delta_{\frac{q^2-1}{2},j,k} &\equiv 0 \pmod{p}\textrm{,\quad for $q\equiv 3\pmod{4}$}.
    \end{align*}
\end{corollary}
\begin{proof}
    The argument of $\gamma_{\frac{q^2-1}{2},j,k}$ and $\delta_{\frac{q^2-1}{2},j,k}$ is similar to Corollary~\ref{cor: sum of beta}, with the fact that
    \[
    (-\alpha)^{jt} = \begin{cases}
        1 & \textrm{, if $q\equiv 1 \pmod {4}$,}\\
        (-1)^j  &\textrm{, if $q\equiv 3\pmod{4}$.}
    \end{cases}
    \]
     for $\alpha\in (\mathbb{F}_q)^2$ and $0\leq j \leq t$
\end{proof}

\section{Rotation Property}
    In this section, we present the rotation property of Dickson polynomials when $q$ is an odd prime power. Consider $D_{120}(x, -1), 3D_{120}(x,2) \in \mathbb{F}_{11}[x]$. Tracking the coefficients of $D_{120}(x,-1)$ and $3D_{120}(x,2)$ for the even-degree term $x^n$, where $n$ ranges from $2$ up to $120$, and arranging them in a table with 10 columns, reading from left to right, we have Table \eqref{tab:tableA} and \eqref{tab:tableB}.
    \begin{table}[H]
        \centering
        \begin{subtable}{0.48\textwidth}
            \centering
            {\scriptsize
            \begin{NiceMatrixBlock}[auto-columns-width]
            \begin{NiceTabular}{@{}|cccccccccc|@{}}[rules/color=[gray]{0.9},rules/width=1pt]
                \hline
                3 & 6 & 2 & 10 & 1 &
                6 & 0 & 0 & 0  & 0 \\
                8 & 1 & 2 & 8  & 7 &
                4 & 2 & 0 & 0  & 0 \\
                0 & 4 & 6 & 1  & 4 &
                9 & 2 & 1 & 0  & 0 \\
                0 & 0 & 1 & 7  & 3 &
                1 & 5 & 6 & 3  & 0 \\
                0 & 0 & 0 & 7  & 5 &
                10& 7 & 2 & 9  & 10\\
                0 & 0 & 0 & 0  & 2 &
                3 & 6 & 2 & 10 & 1 \\
                \hline
            \end{NiceTabular}
            \end{NiceMatrixBlock}}
            \caption{$D_{120}(x,-1)$}
            \label{tab:tableA}
        \end{subtable}
        \hfill
        \begin{subtable}{0.48\textwidth}
            \centering
            {\scriptsize
            \begin{NiceMatrixBlock}[auto-columns-width]
            \begin{NiceTabular}{@{}|cccccccccc|@{}}[rules/color=[gray]{0.9},rules/width=1pt]
                \hline
                1 & 10& 2& 6& 3 &
                2 & 0 & 0& 0& 0 \\ 
                10& 9 & 2& 7& 10&
                5 & 7 & 0& 0& 0 \\ 
                0 & 3 & 6& 5& 1 &
                3 & 7 & 1& 0& 0 \\ 
                0 & 0 & 1& 2& 9 &
                4 & 1 & 6& 4& 0 \\ 
                0 & 0 & 0& 2& 4 &
                7 & 8 & 2& 1& 8 \\ 
                0 & 0 & 0& 0& 6 &
                1 & 10& 2& 6& 3 \\
                \hline
            \end{NiceTabular}
            \end{NiceMatrixBlock}
            }\caption{$3D_{120}(x,2)$}
            \label{tab:tableB}
        \end{subtable}
    
        \caption{Rotate by $180^\circ$}
        \label{tab:combined}
    \end{table}
    
    To prove this rotation property, we examine Table~\ref{tab:3}, which arranges Table~\ref{tab:tableA} into a 11 columns table with the first cell being empty. The proof proceeds in three steps.
    \begin{table}[H]
        \centering
        {\scriptsize
            \begin{NiceMatrixBlock}[auto-columns-width]
            \begin{NiceTabular}{@{}|ccccccccccc|@{}}[rules/color=[gray]{0.9},rules/width=1pt]
                \hline
                  & 3 & 6 & 2 & 10 & 1 & 6 & 0 & 0 & 0 & 0 \\
                8 & 1 & 2 & 8 & 7 & 4 & 2 & 0 & 0 & 0 & 0 \\
                4 & 6 & 1 & 4 & 9 & 2 & 1 & 0 & 0 & 0 & 0 \\
                1 & 7 & 3 & 1 & 5 & 6 & 3 & 0 & 0 & 0 & 0 \\
                7 & 5 &10 & 7 & 2 & 9 & 10& 0 & 0 & 0 & 0 \\
                2 & 3 & 6 & 2 & 10 & 1 \\ 
                \hline
            \end{NiceTabular}
            \end{NiceMatrixBlock}
        }
        \caption{Coefficients of $D_{120}(x,-1)$}
        \label{tab:3}
    \end{table}

    \begin{lemma}
        \label{lem: bino coef equiv1}
        Let $i$ be an integer satisfying $1\leq i \leq \frac{q^2-1}{2}$, then
        \[
                \frac   {q^2-1}
                        {\frac{q^2-1}{2}+i}
                \binom  {\frac{q^2-1}{2}+i}
                        {\frac{q^2-1}{2}-i}
        \equiv
                \frac   {q^2-1}
                        {4(q^2-i)}
                \binom  {q^2-i}
                        {i - 1}
                \left(\frac{1}{16}\right)^{i-1}
        \ \pmod{p},
        \]
    \end{lemma}

    \begin{proof}
    Let
    \[
        u_{j,k} 
    = 
        \frac   {q^2-1}
                {\frac{q^2-1}{2}+jq+k}
        \binom  {\frac{q^2-1}{2}+jq+k}
                {\frac{q^2-1}{2}-jq-k}
    \quad
    \text{and}
    \quad
        v_{j,k}
    =
        \frac   {q^2-1}
                {4(q^2-jq-k)}
        \binom  {q^2-jq-k}
                {jq+k - 1}
        \left(\frac{1}{16}\right)^{j+k-1}.
    \]
    Then, equivalently we may show
    \[
        u_{j,k} \equiv v_{j,k}
        \pmod{p},
    \]
    for integers $j,\ k$ satisfying $1 \leq jq + k \leq \frac{q^2-1}{2}$, $0\leq j\leq \frac{q-1}{2}$ and $0\leq k\leq q-1$.
     
    \textbf{Case 1.} $j=0$.\\
    \[
    u_{0,k} = \frac{q^2-1}{\frac{q^2-1}{2}+k} \binom{\frac{q^2-1}{2}+k}{\frac{q^2-1}{2} - k}
    \quad
    \textrm{and}
    \quad
    v_{0,k} = \frac{q^2-1}{4(q^2-k)} \binom{q^2-k}{k-1}\left( \frac{1}{16}\right)^{k-1}.
    \]
    For $k=1$, a direct computation shows that
    $u_{0,1}\equiv \frac{1}{4}\equiv v_{0,1}\pmod{p}.$
    Furthermore, since
    \[
    u_{0,k} \equiv
            \frac   {\frac{q^2-1}{2}+1}
                    {\frac{q^2-1}{2}+k}
            \ 
            \frac   {(\frac{q^2-1}{2} + k)^{\underline{k-1}}\ (\frac{q^2-1}{2}-k+1)^{\overline{k-1}}\ 2!}
                    {(2k)!}
            \ 
            u_{0,1}
            \equiv
            \frac   {(-1)^{k-1}}
                    {2^{4k-4}}
            \ 
            \frac   {1}
                    {k}
            \ 
            \binom  {2k-2}
                    {k-1}
            \ 
            u_{0,1}\pmod{p},
    \]
    and
    \[
    v_{0,k} \equiv 
            \frac   {q^2-1}
                    {q^2-k}
            \ 
            \frac   {1}
                    {2^{4(k-1)}}
            \ 
            \frac{(q^2-k)^{\underline{k-1}}}{(k-1)!}
            \ 
            v_{0,1}
            \equiv
            \frac   {(-1)^{k-1}}
                    {2^{4k-4}}
            \ 
            \frac   {1}
                    {k}
            \ 
            \binom  {2k-2}
                    {k-1}
            \ 
            v_{0,1}
            \pmod{p},
    \]
    we have
    $u_{0,k}\equiv v_{0,k}\pmod{p},$
    for $0 \leq k \leq q-1$.
    
    \textbf{Case 2.} $k=0$.
    We first simplify $u_{j,0}$ and $v_{j,0}$ by Proposition \ref{Lucas}
    \[
        u_{j,0} 
    = 
        \frac   {q^2-1}
                {\frac{q^2-1}{2} + jq}
        \binom  {\frac{q^2-1}{2} + jq}
                {\frac{q^2-1}{2} - jq}
    \equiv 
        2
        \binom  {\frac{q-1}{2} + j}
                {\frac{q-1}{2} - j}
    \pmod{p},
    \]
    and 
    \[
    v_{j,0} =
            \frac   {q^2-1}
                    {4(q^2-jq)}
            \binom  {q^2-jq}
                    {jq- 1}
            \left(\frac{1}{16}\right)^{j-1}
        \equiv 
            \frac   {-1}
                    {4}
            \binom  {q - j - 1}
                    {j-1}
            \left(\frac{1}{16}\right)^{j-1}   
        \pmod{p}.
    \]
    Similar to case 1, for $j= 1$, we have $u_{1,0}\equiv -\frac{1}{4} \equiv v_{1,0}\pmod{p}$, and by expressing both $u_{j,0}$ and $v_{j,0}$ as multiples of their respective base cases $u_{1,0}$ and $v_{1,0}$, a straightforward algebraic simplification yields $u_{j,0} \equiv v_{j,0}\ \pmod{p}$, for $1\leq j \leq \frac{q-1}{2}.$

    \textbf{Case 3.} General case. For the rest of the cases, we express both $u_{j,k}$ and $v_{j,k}$ as multiples of their respective base cases $u_{j,0}$ and $v_{j,0}$. That is,
    \begin{align*}
        u_{j,k}
    &=
                \frac   {\frac{q^2-1}{2} +jq}
                    {\frac{q^2-1}{2} + jq + k}
            \ 
            \frac   {(\frac{q^2-1}{2} +jq +k)^{\underline{k}}
                    \ 
                    (\frac{q^2-1}{2} -jq -k+1)^{\overline{k}}}
                    {(2jq+2k)^{\underline{2k}}}
            \ 
            u_{j,0}
            \\
    &\equiv
            \frac   {(-1)^k}
                    {2^{4k-1}\ (2k-1)}
            \ 
            \binom{2k-1}{k} u_{j,0}
            \pmod{p},
    \end{align*}
    and
    \[
            v_{j,k}
        \equiv
            \frac   {1}
                    {4k}
            \ 
            \binom  {q-j-1}
                    {j}
            \ 
            \binom  {q-k}
                    {k-1}
            \ 
            \left(
            \frac   {1}
                    {16}
            \right)^{j+k-1}
        \equiv
            \frac   {(-1)^{k}}
                    {2^{4k-1}(2k-1)}
            \binom  {2k-1}
                    {k}
            \ 
            v_{j,0}
            \pmod{p}.
        \]
    Therefore, it follows from Case 2 that
    \[
        u_{j,k} \equiv v_{j,k}\ \pmod{p}
    \]
    for $1\leq j \leq \frac{q-1}{2}$ and $0\leq k\leq q-1$. 
    This completes the proof.
\end{proof}

    Building on this lemma, we establish a symmetric relation, the first part of Theorem~\ref{main theorem: 1}, in Dickson polynomials that explains the patterns observed in the tables above.
    We recall the explicit expression of Dickson polynomials
    (also see~\cite{Lidl}.)
\begin{definition}\label{def: Dickson polynomial}
For positive integers $n$ and $\alpha \in \mathbb{F}_q$, the Dickson polynomials (of the first kind) over $\mathbb{F}_q$ are given by
\[
D_{n}(x,\alpha)=
\sum_{i=0}^{\lfloor {\frac{n}{2}} \rfloor}
\frac   {n}
{n-i}
\binom  {n-i}
{i}
(-\alpha)^i x^{n-2i}.
\]
\end{definition}

\begin{theorem}
    Let $\alpha \in \mathbb{F}_q^\times$, and we have
    \[
        x^{q^2+1}\left(D_{q^2-1}\left(x^{-1},\alpha\right) - 2\right)
    = 
        (-4\alpha)^{-1} \left(D_{q^2-1}\left(x,(16\alpha)^{-1} \right)-2\right).
    \]
\end{theorem}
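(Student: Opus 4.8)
The plan is to prove the identity by comparing the coefficient of each monomial on the two sides, with the combinatorial lemma just established doing the arithmetic heart of the matter. First I would check that both sides are genuine polynomials supported exactly on the even exponents $x^2, x^4, \dots, x^{q^2-1}$. In the defining sum of $D_{q^2-1}(x,\alpha)$ the index $i$ runs over $0 \le i \le (q^2-1)/2$, contributing the monomial $x^{q^2-1-2i}$; the top index $i=(q^2-1)/2$ produces the constant term $\frac{q^2-1}{(q^2-1)/2}\binom{(q^2-1)/2}{(q^2-1)/2}(-\alpha)^{(q^2-1)/2}=2(-\alpha)^{(q^2-1)/2}$, which equals $2$ in $\mathbb{F}_q$ because $\alpha^{q-1}=1$ gives $\alpha^{(q^2-1)/2}=1$ and $q^2\equiv 1\bmod 8$ makes $(q^2-1)/2$ even, so $(-1)^{(q^2-1)/2}=1$. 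Hence $D_{q^2-1}(x^{-1},\alpha)-2$ is a Laurent polynomial with exponents in $\{-(q^2-1),\dots,-2\}$, and multiplying by $x^{q^2+1}$ turns it into an honest polynomial supported on $\{2,4,\dots,q^2-1\}$. The identical computation shows $D_{q^2-1}(x,(16\alpha)^{-1})-2$ is supported on the same exponent set, so both sides of the claimed identity are polynomials of degree $q^2-1$ of this shape, and it suffices to match the coefficient of $x^{2i}$ for each $1\le i\le (q^2-1)/2$.

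Next I would write these two coefficients out explicitly. On the left, the coefficient of $x^{2i}$ comes from the summation index $j=i-1$ in $D_{q^2-1}(x^{-1},\alpha)$, so it equals $\frac{q^2-1}{q^2-i}\binom{q^2-i}{i-1}(-\alpha)^{i-1}$. On the right, before the scalar $(-4\alpha)^{-1}$, the coefficient of $x^{2i}$ in $D_{q^2-1}(x,(16\alpha)^{-1})$ comes from the index $j=(q^2-1)/2-i$ (since $q^2-1-2j=2i$), giving $\frac{q^2-1}{(q^2-1)/2+i}\binom{(q^2-1)/2+i}{(q^2-1)/2-i}\bigl(-(16\alpha)^{-1}\bigr)^{(q^2-1)/2-i}$; I recognize the combinatorial factor here as precisely the left-hand side of the preceding lemma, which I abbreviate $u_i$. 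The stray power $\bigl(-(16\alpha)^{-1}\bigr)^{(q^2-1)/2-i}$ I would collapse in $\mathbb{F}_q$ using $(16\alpha)^{q-1}=1$, hence $(16\alpha)^{(q^2-1)/2}=1$, together with $(-1)^{(q^2-1)/2}=1$, to obtain simply $(-16\alpha)^{i}$.

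Finally I would invoke the preceding lemma in the rearranged form $\frac{q^2-1}{q^2-i}\binom{q^2-i}{i-1}=4\cdot 16^{i-1}\,u_i$ in $\mathbb{F}_p$ (and therefore in $\mathbb{F}_q$), which rewrites the left coefficient as $(-1)^{i-1}\,4\cdot 16^{i-1}\alpha^{i-1}\,u_i$; on the right one has $(-4\alpha)^{-1}u_i(-16\alpha)^i$, and a one-line rearrangement using $16^i/4=4\cdot 16^{i-1}$ and $(-1)^{i+1}=(-1)^{i-1}$ shows this equals $(-1)^{i-1}\,4\cdot 16^{i-1}\alpha^{i-1}\,u_i$ as well. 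Since this holds for every $i$, the two polynomials agree term by term. I expect no real obstacle: the substance is entirely contained in the lemma already proved, and the only thing demanding care is the bookkeeping — keeping the three shifted indices $i$, $i-1$, and $(q^2-1)/2-i$ straight, and using the parity fact that $(q^2-1)/2$ is even together with $(16\alpha)^{(q^2-1)/2}=1$ so that the loose powers of $-1$, $16$, and $\alpha$ cancel in exactly the right way.
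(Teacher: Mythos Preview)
Your proposal is correct and follows essentially the same approach as the paper: expand both sides via the defining sum of $D_{q^2-1}$, reduce the stray powers using $(-\alpha)^{(q^2-1)/2}=1$ (equivalently $(-(16\alpha)^{-1})^{(q^2-1)/2}=1$) in $\mathbb{F}_q$, and then match the coefficient of each monomial through Lemma~3.1. The only cosmetic difference is that the paper reindexes the \emph{left} side so that the quantity $u_i=\frac{q^2-1}{(q^2-1)/2+i}\binom{(q^2-1)/2+i}{(q^2-1)/2-i}$ appears there and Lemma~3.1 applies verbatim, whereas you reindex the \emph{right} side and consequently have to do one extra line of algebra at the end; the content is the same.
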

\begin{proof}
    For $n = q^2-1$, the constant term ($i=n/2$) of $D_{n}(x,\alpha)$ is $2(-\alpha)^{(q^2-1)/2} = 2$.
    Expanding the left-hand side and applying the index transformation $j=\frac{q^2-1}{2}-i$, we obtain
    \[
        x^{q^2+1}\left(D_{q^2-1}\left(x^{-1},\alpha\right) - 2\right)
    =
        \sum_{j=1}^{\frac{q^2-1}{2}}
            \frac   {q^2-1}
                    {\frac{q^2-1}{2}+j}
            \binom  {\frac{q^2-1}{2}+j}
                    {\frac{q^2-1}{2}-j}
            (-\alpha)^{-j}
            x^{(q^2+1)-2j}.
    \]
    Similarly, for the right-hand side, apply the index transformation $j = i+1$, we have
    \[
            (-4\alpha)^{-1} \left(D_{q^2-1}\left(x,(16\alpha)^{-1} \right)-2\right)
        =
            \sum_{j=1}^{\frac{q^2-1}{2}}
                \frac   {q^2-1}
                        {4(q^2-j)}
                \binom  {q^2-j}
                        {j - 1}
                \left(\frac{1}{16}\right)^ {j - 1}
                (-\alpha)^{-j}
                x^{(q^2+1)-2j}.
    \]
    The result then follows directly by comparing the coefficients of $x^{q^2+1-2j}$ using Lemma 3.1.
\end{proof}

Next, consider the Dickson polynomials $D_{60}(x,1)$, $D_{61}(x,9) \in \mathbb{F}_{11}[x]$. Arranged in five columns,  Table~\ref{tab:tableC} presents the coefficients of the even-degree terms $x^n$ of $D_{60}(x,1)$ where $n$ ranges from $60$ down to $2$, and Table~\ref{tab:tableD} lists the coefficients of the odd-degree terms $x^n$ of $D_{61}(x,9)$ where $n$ ranges from $59$ down to $1$. Notably, Table~\ref{tab:tableC} is found to be a $180^\circ$ rotation of Table~\ref{tab:tableD}. The following lemma explains this relation.
    \begin{table}[H]
        \centering
    
        \begin{subtable}{0.48\textwidth}
            \centering
            {\scriptsize
            \begin{NiceMatrixBlock}[auto-columns-width]
            \begin{NiceTabular}{@{}|ccccc|@{}}[rules/color=[gray]{0.9},rules/width=1pt]
                \hline
                1  & 6  & 5  & 0  & 0\\
                10 & 6  & 2  & 2  & 0\\
                0  & 7  & 9  & 2  & 0\\
                0  & 3  & 4  & 5  & 5\\
                0  & 0  & 3  & 7  & 4\\
                0  & 0  & 10 & 6  & 2\\
                \hline
            \end{NiceTabular}
            \end{NiceMatrixBlock}}
            \caption{$D_{60}(x,1)$}
            \label{tab:tableC}
        \end{subtable}
        \hfill
        \begin{subtable}{0.48\textwidth}
            \centering
            {\scriptsize
            \begin{NiceMatrixBlock}[auto-columns-width]
            \begin{NiceTabular}{@{}|ccccc|@{}}[rules/color=[gray]{0.9},rules/width=1pt]
                \hline
                2  & 6  & 10 & 0  & 0\\
                4  & 7  & 3  & 0  & 0\\
                5  & 5  & 4  & 3  & 0\\
                0  & 2  & 9  & 7  & 0\\
                0  & 2  & 2  & 6  & 10\\
                0  & 0  & 5  & 6  & 1 \\
                \hline
            \end{NiceTabular}
            \end{NiceMatrixBlock}
            }\caption{$2D_{61}(x,9)$}
            \label{tab:tableD}
        \end{subtable}
    
        \caption{Rotate by $180^\circ$}
        \label{tab:combined2}
    \end{table}

\begin{lemma}
    \label{lem: bino coef equiv2}
    Let $i$ be an integer satisfying $0\leq i\leq \frac{q^2-1}{4},$ then 
    \[
            \frac   {\frac{q^2-1}{2}}
                    {\frac{q^2-1}{4} +i}
            \binom  {\frac{q^2-1}{4} +i}
                    {\frac{q^2-1}{4} -i}
        \equiv 
            \frac   {q^2+1}
                    {\frac{q^2+1}{2} - i}
            \binom  {\frac{q^2+1}{2} - i}
                    {i}
            \left(\frac{1}{16}\right)^{i}
        \pmod{p},
    \]
\end{lemma}

\begin{proof}
    The proof proceeds similarly to Lemma 3.1. After defining $u_{j,k}$ and $v_{j,k}$, we conduct a case analysis based on the residue of $q\pmod{4}$. Verification is performed in three stages:
    \begin{itemize}
        \item[1.] Initial case: First, we verify that $u_{0,0}\equiv 2 \equiv v_{0,0}\ \pmod{p}.$
        \item[2.] Row-wise extension: We then extend the result to show $u_{j,0}\equiv v_{j,0}\ \pmod{p}$ for all relevant $j$.
        \item[3.] General identity: Finally, we derive the general congruence $u_{j,k} \equiv v_{j,k}\ (\mathrm
        {mod}\ p)$ from case 2.
    \end{itemize}
    Detailed calculations for each case follow the same combinatorial logic as in Lemma~\ref{lem: bino coef equiv1} and are thus omitted here for brevity.
\end{proof}

    In the form of Dickson polynomials, we have the following identity, which shows the second part of Theorem~\ref{main theorem: 1}.
    
\begin{theorem}
    Let $\alpha \in\mathbb{F}_q^\times$ be a square, and we have
    \[
    x^{\frac{q^2+1}{2}}D_{\frac{q^2-1}{2}}(x^{-1},\alpha)
    =
    2D_{\frac{q^2+1}{2}}\left(x, (16\alpha)^{-1}\right).
    \]
\end{theorem}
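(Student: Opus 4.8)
The plan is to expand both sides with Definition~\ref{def: Dickson polynomial} and compare coefficients monomial by monomial. Since $q$ is odd we have $q^2\equiv 1\pmod 8$, so $\frac{q^2-1}{4}$ is a nonnegative even integer, $\frac{q^2-1}{2}=2\cdot\frac{q^2-1}{4}$ is even, and $\frac{q^2+1}{2}=2\cdot\frac{q^2-1}{4}+1$ is odd; in particular $\lfloor\tfrac12\cdot\tfrac{q^2-1}{2}\rfloor=\lfloor\tfrac12\cdot\tfrac{q^2+1}{2}\rfloor=\frac{q^2-1}{4}$, so both Dickson sums run over $0\le i\le\frac{q^2-1}{4}$. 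Expanding the left side,
\[
x^{\frac{q^2+1}{2}}D_{\frac{q^2-1}{2}}(x^{-1},\alpha)=\sum_{i=0}^{\frac{q^2-1}{4}}\frac{\frac{q^2-1}{2}}{\frac{q^2-1}{2}-i}\binom{\frac{q^2-1}{2}-i}{i}(-\alpha)^i\,x^{1+2i},
\]
and expanding the right side (using $2\cdot\frac{q^2+1}{2}=q^2+1$ and $(-(16\alpha)^{-1})^i=(-1)^i16^{-i}\alpha^{-i}$),
\[
2D_{\frac{q^2+1}{2}}\!\left(x,(16\alpha)^{-1}\right)=\sum_{i=0}^{\frac{q^2-1}{4}}\frac{q^2+1}{\frac{q^2+1}{2}-i}\binom{\frac{q^2+1}{2}-i}{i}\frac{(-1)^i}{16^i\alpha^i}\,x^{\frac{q^2+1}{2}-2i}.
\]

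Next I would reindex the left-hand sum by $i\mapsto\frac{q^2-1}{4}-i$; this sends the exponent $1+2i$ to $\frac{q^2+1}{2}-2i$ and sends the coefficient to $\frac{\frac{q^2-1}{2}}{\frac{q^2-1}{4}+i}\binom{\frac{q^2-1}{4}+i}{\frac{q^2-1}{4}-i}(-\alpha)^{\frac{q^2-1}{4}-i}$. Comparing with the right-hand term of the same degree, the theorem reduces to the scalar identity
\[
\frac{\frac{q^2-1}{2}}{\frac{q^2-1}{4}+i}\binom{\frac{q^2-1}{4}+i}{\frac{q^2-1}{4}-i}(-\alpha)^{\frac{q^2-1}{4}-i}=\frac{q^2+1}{\frac{q^2+1}{2}-i}\binom{\frac{q^2+1}{2}-i}{i}\frac{(-1)^i}{16^i\alpha^i}
\]
in $\mathbb{F}_q$, for each $0\le i\le\frac{q^2-1}{4}$. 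The binomial parts of this identity are precisely the content of the previous lemma, which gives $\frac{\frac{q^2-1}{2}}{\frac{q^2-1}{4}+i}\binom{\frac{q^2-1}{4}+i}{\frac{q^2-1}{4}-i}\equiv 16^{-i}\,\frac{q^2+1}{\frac{q^2+1}{2}-i}\binom{\frac{q^2+1}{2}-i}{i}\pmod p$; substituting this in and cancelling the common factor $16^{-i}$ leaves only the requirement $(-\alpha)^{\frac{q^2-1}{4}-i}=(-1)^i\alpha^{-i}$, that is, $(-\alpha)^{\frac{q^2-1}{4}}=1$. (If the binomial factor vanishes mod $p$ both coefficients are $0$, so this reduction is unconditional.)

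The one genuinely new input, and the only place the square hypothesis is used, is the verification that $(-\alpha)^{\frac{q^2-1}{4}}=1$ in $\mathbb{F}_q$. Writing $\alpha=\gamma^2$ with $\gamma\in\mathbb{F}_q^\times$ gives $(-\alpha)^{\frac{q^2-1}{4}}=(-1)^{\frac{q^2-1}{4}}\gamma^{\frac{q^2-1}{2}}$; since $(q-1)\mid\frac{q^2-1}{2}$ and $\gamma^{q-1}=1$ we get $\gamma^{\frac{q^2-1}{2}}=1$, and since $\frac{q^2-1}{4}=\frac{q-1}{2}\cdot\frac{q+1}{2}$ is a product of two consecutive integers it is even, so $(-1)^{\frac{q^2-1}{4}}=1$. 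This closes the coefficient comparison and proves the theorem. I do not expect a real obstacle: the hard combinatorial identity is exactly the preceding lemma, and what remains is bookkeeping — pinning down the common summation range $0\le i\le\frac{q^2-1}{4}$, lining up exponents after the substitution $i\mapsto\frac{q^2-1}{4}-i$, and the short Euler-criterion computation that annihilates the power of $-\alpha$.
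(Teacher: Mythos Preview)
Your proof is correct and follows essentially the same approach as the paper: expand both sides via Definition~\ref{def: Dickson polynomial}, reindex by $i\mapsto\frac{q^2-1}{4}-i$, and invoke Lemma~3.3 to match the binomial coefficients. You are in fact more explicit than the paper about the step $(-\alpha)^{\frac{q^2-1}{4}}=1$, which the paper handles only by the opening remark ``since $\alpha\in\mathbb{F}_q^\times$ is square and $8\mid q^2-1$.''
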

\begin{proof}
    Since $\alpha \in \mathbb{F}_q^\times$ is square and $8 \mid q^2-1$, by applying the index tranformation
    \[
            x^{\frac{q^2+1}{2}}D_{\frac{q^2-1}{2}}(x^{-1},\alpha)
        =
            \sum _{i=0}^{\frac{q^2-1}{4}}
            \frac   {\frac{q^2-1}{2}}
                    {\frac{q^2-1}{4} +i}
            \binom  {\frac{q^2-1}{4} +i}
                    {\frac{q^2-1}{4} -i}
            (-\alpha)^{-i}x^{\frac{q^2+1}{2} - 2i},
    \]
and
    \[
            2D_{\frac{q^2+1}{2}}\left(x,(16\alpha)^{-1}\right)
        =
            2\sum_{i=0}^{\frac{q^2-1}{4}}
            \frac   {\frac{q^2+1}{2}}
                    {\frac{q^2+1}{2} - i}
            \binom  {\frac{q^2+1}{2} - i}
                    {i}
            \left(\frac{1}{16}\right)^{i}
            (-\alpha)^{-i}
            x ^ {\frac{q^2+1}{2} - 2i}.
    \]
    The result follows directly by comparing the coefficients of $x ^ {\frac{q^2+1}{2} - 2i}$ from Lemma~\ref{lem: bino coef equiv2}.
\end{proof}

\appendix
\section{Algorithms}
This section is devoted to the development of algoritms for identifying Dickson polynomials modulo $x^q-x$. For an odd prime power $q$, given an arbitrary polynomial $f(x)\in \mathbb{F}_q[x]$, a straightforward approach is to exhaustively check all candidates. A better method can use the parity symmetric properties of Dickson polynomials. Observe the following second-order recurrence relation.

\begin{proposition}
    \label{prop: recur_halving}
    For $n\geq 4$, the Dickson polynomials satisfy:
    \[
    D_n(x,\alpha) = (x^2-2\alpha) D_{n-2}(x,\alpha) - \alpha^2 D_{n-4}(x,\alpha).
    \]
\end{proposition}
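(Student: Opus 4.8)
The plan is to derive the four-term recurrence purely algebraically by iterating the standard three-term recurrence~\eqref{recurssive relation} twice and eliminating the odd-shifted term $D_{n-3}$. First I would record that, with the conventions $D_0(x,\alpha)=2$ and $D_1(x,\alpha)=x$, the identity $D_m(x,\alpha)=xD_{m-1}(x,\alpha)-\alpha D_{m-2}(x,\alpha)$ in fact holds for all $m\geq 2$, since for $m=2$ it reads $x\cdot x-\alpha\cdot 2=x^2-2\alpha=D_2(x,\alpha)$. Thus for $n\geq 4$ we may legitimately apply it to $D_n$, to $D_{n-1}$, and to $D_{n-2}$.

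The computation then goes as follows. Starting from $D_n=xD_{n-1}-\alpha D_{n-2}$ and substituting $D_{n-1}=xD_{n-2}-\alpha D_{n-3}$ gives
\[
D_n(x,\alpha)=(x^2-\alpha)D_{n-2}(x,\alpha)-\alpha x D_{n-3}(x,\alpha).
\]
Next, from $D_{n-2}=xD_{n-3}-\alpha D_{n-4}$ we solve $x D_{n-3}(x,\alpha)=D_{n-2}(x,\alpha)+\alpha D_{n-4}(x,\alpha)$ and substitute, obtaining
\[
D_n(x,\alpha)=(x^2-\alpha)D_{n-2}(x,\alpha)-\alpha\bigl(D_{n-2}(x,\alpha)+\alpha D_{n-4}(x,\alpha)\bigr)=(x^2-2\alpha)D_{n-2}(x,\alpha)-\alpha^2 D_{n-4}(x,\alpha),
\]
which is the claimed identity. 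There is essentially no obstacle here; the only point requiring a line of care is the index bookkeeping—checking that the three-term recurrence is available for each of the three indices $n$, $n-1$, $n-2$ when $n\geq 4$, which is why the base case $D_2=xD_1-\alpha D_0$ must be noted explicitly.

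As an alternative (which I would mention only if a self-contained verification is preferred), the same identity follows instantly from the functional equation~\eqref{functional equation}: writing $x=u+\alpha/u$ and $a=u$, $b=\alpha/u$ so that $ab=\alpha$, the elementary identity $a^n+b^n=(a^2+b^2)(a^{n-2}+b^{n-2})-a^2b^2(a^{n-4}+b^{n-4})$ together with $a^2+b^2=(u+\alpha/u)^2-2\alpha=x^2-2\alpha$ yields $D_n(x,\alpha)=(x^2-2\alpha)D_{n-2}(x,\alpha)-\alpha^2 D_{n-4}(x,\alpha)$ for infinitely many values of $x$ over the algebraic closure, hence as a polynomial identity. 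I would present the iterated-recurrence proof as the main argument since it is finitary and avoids any appeal to infinitude of values.
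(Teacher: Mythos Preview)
Your main argument is correct and essentially identical to the paper's: both apply the three-term recurrence to $D_n$ and $D_{n-1}$ to obtain $(x^2-\alpha)D_{n-2}-\alpha x D_{n-3}$, then use the recurrence for $D_{n-2}$ to eliminate $xD_{n-3}$. Your extra care about the base case $D_2=xD_1-\alpha D_0$ and the alternative functional-equation argument are fine additions but not needed for the paper's version.
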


\begin{proof}
    This follows immediately by applying recurrence (Equation~\eqref{recurssive relation}) twice.
\end{proof}

For odd prime power $q$, The Dickson polynomials exhibit a clear parity symmetry: if $n$ is odd, all coefficients of even-degree terms are zero; conversely, if $n$ is even, all coefficients of odd-degree terms vanish. Consequently, before executing a full check, we can immediately identify a polynomial as non-Dickson if its coefficient structure violates this parity.
However, for large $q$, this approach still has $O(q \cdot \pi_q(\alpha))=O(q^3)$ complexity. Alternatively, we can identify the parameter $\alpha$ directly from the coefficients of $f(x)$, thereby avoiding a brute-force search over $\mathbb{F}_q^\times$. This is motivatived by the tables in Section~5.

We then define some pre-computable constants:

\[
\beta_{n,j,k}=
\begin{cases}
    \frac{n}{n-js-k}\binom{n-js-k}{js+k},&\text{if $q$ is even;}\\
    \frac{n}{n-js -k}\binom{n-js -k}{js +k},&\text{if $q$ is odd and $-\alpha$ is square in $\mathbb F_q$;}\\
    (-1)^j \frac{n}{n-js -k}\binom{n-js -k}{js +k},&\text{if $q$ is odd and $-\alpha$ is nonsquare in $\mathbb F_q$.}
\end{cases}
\]

\[
s=\begin{cases}
    q-1,&\text{$q$ is even;}\\
    \frac{q-1}{2},&\text{$q$ is odd.}
\end{cases}\qquad
B=\begin{cases}
    
    2(-\alpha)^{n/2},&\text{if $q$ is odd and $n$ is even;}\\
    0,&\text{otherwise.}
\end{cases}
\]
and define
\[
B_{n,k,\alpha} =\sum_{\substack{j\geq 0 \\js+k<n/2}}\beta_{n,j,k}.
\]
Then Dickson polynomial can be uniformly express as
\[
D_n(x,\alpha)\equiv B + \sum_{k=0}^{s-1} B_{n,k,\alpha}\ (-\alpha)^k x^{n+2(s-k)}\pmod{x^q-x},
\]
Thus, whenever \(B_k\neq 0\), the coefficient of the term corresponding to \(x^{n-2k}\) determines \(\alpha^k\). In particular, if \(B_{n,1,\alpha}\neq 0\), then \(\alpha\) can be recovered directly by dividing the coefficient of the term \(x^{n-2}\) in \(D_n(x,\alpha)\pmod{x^q-x}\) by \(B_{n,1,\alpha}\).

More generally, suppose $f(x)\equiv D_n(x,\alpha)\pmod{x^q-x}$ for an unkown nonzero $\alpha$. Let $C_k$ denote the coefficient of the term corresponding to $x^{n-2k}$ in $f(x)$. If $B_{n,k,\alpha}\neq 0$, then we must have $C_k\neq 0$; otherwise we may immediately conclude that $f(x)$ is not a Dickson polynomial.

Let $S=\{k|C_k\neq 0\}$ and let $d=\gcd(\{q-1\}\cup S)$. Then there exist integers $c_k$ such that
\[
d\equiv \sum_{k\in S} c_kk\pmod{q-1}
\]
Thus, we have 
\[
(-\alpha)^d = \prod_{k\in S}\left(\frac{C_k}{B_{n,k,\alpha}}\right)^{c_k}.
\]
If \(d=1\), this determines \(\alpha\) uniquely. If \(d>1\), then the possible values of \(\alpha\) form a set of at most \(d\) candidates, which can then be checked directly. The resulting procedure is summarized in the following pseudocode. It reduces the search over \(\alpha\) substantially once \(n\) is fixed. The remaining bottleneck is the determination of the index \(n\), which resembles a discrete-logarithm-type problem arising from the power-map structure behind Dickson polynomials.

\begin{algorithm}[H]
    \caption{Parameter $\alpha$ Extraction}
\begin{algorithmic}[1]
    \Require{$f(x)\in\mathbb{F}_q[x]$.}
    \Ensure{Whether $f(x) \equiv D_n(x,\alpha)\pmod{x^q-x}$ for some $n$ and $\alpha \in \mathbb{F}_q^\times$.}
    \State Check if $f(x)$ fails for symmetric test or a monomial.
    \For{$n = 1$ \textbf{to} $q^2 - 1$}
        \State Compute $B_k$
        \State $S=\{k|C_k\neq 0\}$
        \State $d=\gcd(q-1, S)$
        \State Compute $c_k$ by B\'{e}zout identity
        \If{d==1}
            \State Uncover the unique $\alpha$
        \Else
            \State Try at most $d$ many candidates of $\alpha$
        \EndIf
    \EndFor
\end{algorithmic}
\end{algorithm}

\section*{Acknowledgement}
We would like to express our sincere gratitude to Wei-Hsuan Yu, Shen-Fu Tsai, B. Hutz, T. Tucker, M. Manes, and B. Thompson for their invaluable support and insightful discussions throughout this work. Peng would also like to thank Howard Yu-Hao Chen for him pointing out a serious typo in associate names. Chen would also like to thank Sen-Peng Eu for his guidance and encouragement. Finally, we both wish to thank Liang-Chung Hsia for his continuous support and collaboration. Finally, We thank the reviewers for excellent suggestions, which significantly improved the clarity of our manuscript.

\bibliographystyle{amsplain}
\bibliography{reference}

\end{document}